\definecolor{MyLinkColor}{rgb}{0,0,0.4}
\newcommand{\tr}{\mathop{\rm tr}\nolimits}
\newcommand{\h}{\rho}
\newcommand{\0}{\Omega}
\newcommand{\p}{\partial}
\newcommand{\w}{\widetilde}
\newcommand{\ov}{\overline}
\newcommand{\G}{\Gamma}
\newcommand{\A}{\mathcal{A}}
\newcommand{\B}{\mathcal{B}}
\newcommand{\oo}{\mathcal{O}}
\newcommand{\ww}{\mathcal{W}}
\newcommand{\zz}{\mathcal{Z}}
\newcommand{\cF}{\mathcal{F}}
\newcommand{\kL}{\mathcal{L}}
\newcommand{\Q}{\mathcal{Q}}
\newcommand{\cR}{\mathcal{R}}
\newcommand{\T}{\mathcal{T}}
\newcommand{\U}{\mathcal{U}}
\newcommand{\V}{\mathcal{V}}
\newcommand{\X}{\mathbb{X}}
\newcommand{\Y}{\mathbb{Y}}
\newcommand{\R}{\mathbb{R}}
\newcommand{\s}{\mathbb S}
\newcommand{\N}{\mathbb{N}}
\newcommand{\Z}{\mathbb{Z}}
\newtheorem{thm}{Theorem}[section]
\newtheorem{prop}[thm]{Proposition}
\newtheorem{lemma}[thm]{Lemma}
\theoremstyle{remark} 
\newtheorem{rem}[thm]{Remark}
\numberwithin{equation}{section}
\title[On the well-posedness of a water-mud system]
{On the well-posedness of a mathematical model describing  water-mud interaction}
\subjclass[2010]{35A39, 76A05, 35R57, 76T99.}
\keywords{Classical solution; non-Newtonian fluid; two-phase moving boundary problem.}
\author[Joachim Escher]{Joachim Escher}
\author[Anca--Voichita Matioc]{Anca--Voichita Matioc}
\address{Institut f{\"u}r Angewandte Mathematik,  Leibniz Universit{\"a}t  Hannover, Welfengarten 1, 30167 Hannover, Deutschland. }
\email{escher@ifam.uni-hannover.de}
\email{matioca@ifam.uni-hannover.de}
\begin{document}
\begin{abstract}
In this paper we consider  a mathematical model describing the two-phase interaction
between water and mud in a   water canal when the width of the canal is small compared to its depth. 
The mud is treated as a non-Netwonian fluid  and the interface between the mud and fluid is allowed to move under the influence of gravity and surface tension.

We reduce the mathematical formulation, for small boundary and initial data, to a fully nonlocal and nonlinear  problem and prove   its local well-posedness by using  abstract parabolic theory. 
\end{abstract}

\maketitle

\section{Introduction}
We consider herein a mathematical model describing the evolution of a two-phase   system consisting of mud and water.
This problem is of immense physical importance as it is related to sediment transportation in coastal and estuarine regions.
Therefore, a large number of  experiments and theoretical investigations have been and are still dedicated to better understand two-phase  mud-water systems. 

We shall treat the mud as being a non-Newtonian fluid, which reflects the rheological behaviour of fluid muds, cf. \cite{T}, as the viscosity changes with the rate of strain.
The viscosity function for the mud, though general,  is assumed to satisfying certain, not to restrictive, conditions.
These conditions appear in the literature in the context of the  non-Newtonian Navier-Stokes problem \cite{BP} as well for the Stokesian Hele-Shaw flow \cite{EM1} and it was proven that these are
sufficient conditions to ensure the local well-posedness of these problems.
A diffuse interface model for the flow of two viscous incompressible
Newtonian fluids in a bounded domain have been analyzed at the level of weak solutions in \cite{A}, when considering  fluids with different densities.
Assuming a sharp interface between the fluids, which are modeled by the full Navier-Stokes equations and have constant densities,  the new formulation has been studied  in \cite{PS, PS1} by using maximal regularity for the linearized problems.

Numerically, this physical setting has also received plenty of attention leading to a large number of models and experiments investigating the entrainment and settling of the fluid mud, cf. \cite{Hsu, KW, W}.
Other  experimental studies are concerned with the damping effect of fluid muds on the waves traveling at the water surface, see e.g. \cite{DL, WG}.

Fluid mud is a highly concentrated suspension of fine grained sediments and is treated herein as being a homogeneous fluid with general shear dependent viscosity.
Furthermore, the width of the canal is assumed small compared to its length and hight, so that, width averaging the Navier-Stokes equations we can 
approximate the flow in the water and mud region by  two-dimensional  linear and nonlinear Darcy's laws, respectively, cf. \cite{KSP}.
In this way we incorporate the viscous behaviour of the fluid mud into our model but loose the generality one has when working with the full Newtonian and non-Newtonian  Navier-Stokes equations.
Assuming also that there is a sharp interface between mud and water, we are lead to a coupled problem 
describing the evolution of the water-mud system, and, 
if the mud is replaced by a Newtonian fluid, we rediscover the Muskat problem \cite{ CCG, EM, Y} which is a current research topic.      
We note also that there are recent investigations \cite{MS} of the least squares formulation for a two-phase coupled problem with Stokes flow in the one subdomain and a linear Darcy flow model for the second fluid. 

Having a general viscosity function makes the analysis more involved compare to the Newtonian case \cite{ CCG, EM, Y}.
Using elliptic theory for quasilinear equations though, we are able to formulate the problem with 
the help of a nonlinear and nonlocal operator depending on the function  parameterizing the sharp interface between the mud and water and its time derivative.   
Taking advantage of the properties of the flow, that is the volume of fluids are preserved,  the implicit function theorem
allows us to reformulate the problem as an abstract evolution equation for small boundary and initial data.
Under a smallness condition for the time derivative of this function, which is necessary only if the mud is a shear thinning fluid, 
we prove then by using parabolic theory \cite{L} existence and uniqueness of local solutions for our problem.

The outline of the paper is as follows. In Section \ref {S2} we present the mathematical model and state the main result of this paper, Theorem \ref{thmain}.
In Section \ref{S3} we  transform the coupled system of equations obtain in Section \ref{S2} and express the problem as a single operator equation.
The proof of the main result is found in Section \ref{S4}.

\section{The mathematical model and the main result}\label{S2}
We present now the mathematical model describing the two-phase interaction between water and mud in a  water canal with small width.
The subscript $m$ is used when referring to the mud and which occupies the domain $\Omega_m(t)$, respectively $w$ for the water located in $\Omega_w(t).$
These two fluids are separated by the interface $\Gamma(t)$ which evolves in time and is to be determined as a part of the problem.
The physical relevant problem is essentially three-dimensional, but it may be approximated by a two-dimensional mathematical model.
 We refer to \cite{KSP} for a deduction of a generalized Darcy's law for non-Newtonian fluids, which serves as a approximation of the non-Newtonian  Navier-Stokes equations.
Thus, we restrict our considerations to the situation when $\Omega_{m}(t), \Omega_{w}(t)\subset \R^2.$

The mud-water problem is a potential flow, in the sense that the velocity fields $\vec v_m, \vec v_w$ satisfy Darcy laws. 
The motion of the mud  is governed by the nonlinear  Darcy's law
\begin{equation}\label{darc+}
 \vec v_m=-\frac{\nabla u_m}{ \mu_m (|\nabla u_m|^2)} \qquad \text{in} \quad \Omega_m(t)
\end{equation}
 and in the water domain we set  
\begin{equation}\label{darc-}
 \vec v_w=-\frac{1}{\mu_w}\nabla u_w  \phantom{aaa}\qquad \text{in}\quad \Omega_w(t),
\end{equation}
$\mu_w$ being the constant viscosity of the water.
The effective viscosity $\mu_m$ is defined (cf. \cite{KSP}) by
\begin{equation}\label{eq:visc}
 \mu_m(r):=\left(\int\limits_{-1}^1 \frac{s^2}{\w \mu(rs^2)}\, ds\right)^{-1} \qquad\text{for  $r\geq 0$}.
\end{equation} 
With  $\mu \in C^\infty([0,\infty), (0, \infty))$ denoting the shear-rate dependent viscosity of the mud, 
we let $\w\mu:= c \mu\circ[r\mapsto r\mu^2(r)]^{-1}$, 
whereby $c$ is a constant depending on the gap width  of the canal. 
The invertibility of the mapping $[[0,\infty)\ni r\mapsto r\mu^2(r)\in[0,\infty]$ is guaranteed if we restrict our considerations to fluids 
having viscosity functions which satisfy:
\begin{equation}\label{viscosity}
m\leq \mu \leq M\quad\text{and}\quad m\leq \mu(r)+2r\mu'(r)\leq M
\end{equation}
for some positive constants $m, M.$
It follows  from \eqref{eq:visc} that $\mu_m$ inherits similar properties as $\mu,$ that is $\mu_m \in C^\infty([0,\infty), (0, \infty))$
and $\mu_m$ satisfies 
\begin{equation}\label{viscosity'}
m\leq \mu_m \leq M\quad\text{and}\quad m\leq \mu_m(r)-2r\mu_m'(r)\leq M
\end{equation}
possibly with different constants. 
We refer to \cite{EM1} for precise calculations.
Moreover, the scalar functions  $u_w$ and $u_m$ are defined by
\begin{equation}\label{potm}
\begin{array}{rcccc}
 u_m:= p_m+g\rho_m y  \quad \text{in $\Omega_m(t)$}\qquad \text{and} \qquad u_w:= p_w+g\rho_w y  \quad \text{in  $\Omega_w(t)$},
\end{array}
\end{equation}
whereby  $p_m, p_w$ are the dynamic pressures,   $\rho_m, \rho_w$ denote the constant densities of the fluids,
 $g$ is the gravity constant, and  $y$ is the height coordinate.
Relations \eqref{viscosity} are  satisfied, besides by all Newtonian fluids, by the Bingham model with  dynamic viscosity
\begin{equation}\label{hectorite}
\mu(r)=\mu_\infty+\frac{\tau_0\beta}{1+\beta r}
\end{equation}
if and only if  $\beta\tau_0<4\mu_\infty.$
For this reason, suspensions of hectorite can be considered in our model (cf. \cite{T}). 

Additionally to \eqref{potm},  we assume that the fluids are incompressible, that means
\begin{equation}\label{incom}
\begin{array}{rccc}
 \nabla \cdot \vec v_m&=&0 \quad \text{in $ \Omega_m$} \qquad\text{and}\qquad
\nabla \cdot \vec v_w=0 \quad \text{in $ \Omega_w$}.
\end{array}
\end{equation}
The surface tension plays a major role in interfacial phenomena as a restoring force.
 It  compensates the pressure difference across the interface $\Gamma(t)$ according to  Laplace-Young's condition, 
 so that we get the following equation  at the interface between the fluids:
\begin{equation}\label{LY}
 u_w-u_m=\gamma \kappa_{\Gamma(t)}+g(\rho_w- \rho_m)y \qquad \text{on} \quad \Gamma(t),
\end{equation}
with $\gamma$ being the surface tension coefficient at the interface, and $\kappa_{\Gamma(t)}$ is the curvature of $\Gamma(t).$
Equalizing the normal components of the two velocity fields on the interface $\Gamma(t)$,
 we also obtain two kinematic boundary conditions at this boundary:
\begin{equation}\label{kc}
 V(t, \cdot)= \langle \vec v_m (t, \cdot), \nu(t,\cdot)\rangle 
= \langle \vec v_w (t, \cdot), \nu(t,\cdot)\rangle \qquad \text{on} \quad \Gamma(t),
\end{equation}
which means that the interface moves along with the fluids (the interface consists  of the same particles at all times).
We denoted here by $\nu(t)$ the unit normal at $\Gamma(t)$ which points into $\Omega_w(t),$ and $V(t)$ is the normal velocity of the interface.
To complete the model, we  presuppose
that the bottom $\Gamma_ {-1}=[y=-1]$ of the canal  is impermeable, that is 
\begin{equation}\label{bot}
 u_{m,2}=0 \qquad \text{on} \quad \Gamma_{-1},
\end{equation}
where $u_{m,i}$  and $u_{m,ij},$ $1\leq i,j\leq2,$ stand the first and second order derivatives of $u_m$ (similarly for $u_w$).
Also, we prescribe Dirichlet boundary conditions at the interface $\Gamma_1:=[y=1]$, which is presupposed to be located above $\Gamma(t)$, that is: 
\begin{equation}\label{top}
u_w=h(t, x) \qquad \text{on} \quad \Gamma_{1}.
\end{equation}
Lastly, we assume  the interface at time $t=0$ to be known
\begin{equation}\label{int}
\Gamma(0)=  \Gamma_{0}.
\end{equation}

System \eqref{darc+}--\eqref{int} is a two-phase moving boundary problem. 
The main interest is to determine the motion of the interface $\Gamma(t)$ separating the fluids.
If we know $\Gamma(t),$ then we can find the potentials $u_m$ and $u_w$ by solving elliptic mixed boundary value problems.

\subsection{Parameterizing the boundary}
In the following we restrict our considerations to  horizontally $2\pi-$periodic flows.
The unknown interface $\Gamma(t),$ which is to be determined, is assumed to be parametrized by a function from the set
\[
 \U:=\{ f\in C(\s): ||f||_{C(\s)}<1/2\},
\]
where $\s\cong R/2\pi\Z$ is the unit circle. 
 Let $\alpha \in (0,1)$ be fixed for the remainder of this paper and define   $\X:=h^{4+\alpha}_0(\s)$ 
when we incorporate surface tension effects into our problem,  respectively $\X:=h^{2+\alpha}_0(\s)$ when $\gamma=0.$  
Moreover,  we let $\Y:=h^{1+\alpha}_0(\s).$
The small H\"older space $h^{m+\beta}(\s),$ $m \in \N$ and $ \beta \in (0,1)$, is defined as being the closure of $C^\infty(\s)$
in $C^{m+\beta}(\s).$ 
Particularly, small H\"older spaces are densely and, by Arzel\`a-Ascoli's theorem, also compactly embedded in those with lower exponent.
A further important feature in our context is that they  are stable under the continuous interpolation functor $(\cdot|\cdot)_{\theta,\infty}^0,$ cf. \cite{L}.
The subspaces $h^{m+\beta}_0(\s) $  consist only of the function in $h^{m+\beta}(\s) $ with integral mean zero.
Our choice for $\X$ and $\Y$ is due to the following observation:  if $\Gamma(t)$ is at rest the flat line $y=0,$ because there is no flux over $\Gamma_{-1},$ cf. \eqref{bot},
the volume of fluid mud of a solution $\Gamma(t)$ should match the one at rest. 
  Lastly, we define $\V:=\U\cap \X$ and observe that is an open neighborhood of the zero function in $\X.$ 

By our  modeling considerations,  the two fluids completely fill the  domain $\Omega:=\s\times (-1, 1).$
If $f:[0,T]\to \V$, with $T>0$, is a function describing the  evolution of the interface between the fluids, then at any 
time $t\in [0,T]$ we have that $\Omega_m(t)=\Omega_m(f(t)),$ $\Omega_w(t)=\Omega_w(f(t))$, and $\G(t)=\G(f(t)),$ 
where, given $h\in \V$, $\Gamma(h):=\{(x, h(x): \, x\in \s\}$ and 
\begin{equation*}
\begin{array}{lllcc}
 &\Omega_m(h):=\{ (x,y)\in \Omega: \, -1<y<h(x)\}, \qquad\Omega_w(h):=\{ (x,y)\in \Omega: \, h(x)<y<1\}.
\end{array}
\end{equation*}
In order to reformulate the kinematic conditions \eqref{kc} we follow the evolution of a particle  $(x(t), y(t))$ on the interface $\Gamma(f(t)).$ 
By the definition of $\Gamma(f(t))$, we know that  $y(t)= f(t,x(t))$, and differentiating this relation with respect to time yields $\p_tf=(-f',1)\vec v,$
where $\vec v(t)$ is the velocity of $\Gamma(t).$ 
This shows that the normal velocity $V(t)$ of the interface $\Gamma(t)$ is given by $V(t)=\p_t f(t)/\sqrt{1+f'^2(t)}.$

Therewith, the evolution  of the water-mud system 
\eqref{darc+}--\eqref{int}   is described by the following system of equations:
\begin{equation}\label{newpb}
\left\{
 \begin{array}{rllllllllllll}
\Delta u_w&=& 0 \qquad &\text{in} \quad   \Omega_w(f), \\
\nabla \left(\displaystyle\frac{\nabla u_m}{\mu_m(|\nabla u_m|^2)}\right)&=& 0 \qquad &\text{in} \quad  \Omega_m(f), \\
u_w &=& h \qquad &\text{on} \quad  \Gamma_{1}, \\
 u_{m,2} &=& 0  \qquad &\text{on} \quad  \Gamma_{-1}, \\
u_w-u_m&=& \gamma \kappa_{\Gamma(f)}+g(\rho_w-\rho_m)f \qquad &\text{on} \quad  \Gamma(f),\\
\p_tf= -\displaystyle\frac{1}{\mu_w}\sqrt{1+f'^2}\p_\nu u_w&=& -\displaystyle\frac{\sqrt{1+f'^2}}{\mu_m(|\nabla u_m|^2)}\p_\nu u_m 
\qquad &\text{on} \quad  \Gamma(f),\\
f(0) &=&f_0,
  \end{array}
\right.
\end{equation}
where  $f_0\in  \V$ is chosen such $\Gamma_0=\Gamma(f_0).$
We shall assume that the Dirichlet boundary data $h$ is continuous in time, more precisely
\begin{equation}\label{bcthm}
h\in C([0, \infty), h^{2+\alpha}(\s)).
 \end{equation}
 For the sake of simplicity, we identify  functions defined on $\G_{1}$ and $\G_{-1}$ with functions on the unit circle $\s.$

We shall call a triple $(f, u_w, u_m)$   classical H\" older solution of \eqref{newpb} on $[0,T]$ if 
\begin{align*}
 &f\in C([0,T], \V)\cap C^1([0,T], \Y),\\
 & u_w(t, \cdot) \in h^{2+\alpha}(\Omega_w(f(t))), \quad u_m(t, \cdot) \in h^{2+\alpha}(\Omega_m(f(t))), \, \, t\in [0,T],
\end{align*}
and if $(f, u_w, u_m)$ satisfies \eqref{newpb} pointwise. 
Given functions  $f_1, f_2 \in C(\s)$ with $f_1<f_2$, we let $\0_{f_1,f_2}:=\{(x,y)\,:f_1(x)<y<f_2(x)\}$. 
The space 
 $h^{2+\alpha}(\0_{f_1,f_2})$ 
is defined then as being the closure of the 
 smooth functions $C^{\infty}(\ov {\Omega_{f_1, f_2}}) $  
in the Banach space $C^{2+\alpha}(\ov {\0_{f_1,f_2}})$.
The main result of this paper is the following theorem:

\begin{thm}[Local well-posedness]\label{thmain}
 Let $\gamma \in [0, \infty), \, \ov h\in \R$ and assume that
\begin{align}\label{asump}
 \gamma >0 \quad \text{or} \quad \rho_m >\rho_w.
\end{align}
Then, there exist $\oo_f$ an open neighborhood of zero in $\X,$  $\zz$ an open neighborhood of zero in  $\Y$, and $\oo_{\ov h}$ an open neighborhood of $\ov h$ in $h^{2+\alpha}(\s)$ such that 
if $f_0\in \oo_f$, $h$ satisfies \eqref{bcthm}, and $h(0)\in \oo_{\ov h}$ then there exists $T>0$ and a unique maximal solution $f:[0, T)\to \oo_f$ satisfying  $\p_tf(t)\in \zz$ and $h(t)\in \oo_{\ov h}$ for all $t \in [0, T).$
Moreover, if $\mu_m'\geq 0$ then $\zz=\Y.$   
\end{thm}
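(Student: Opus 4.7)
The plan is to reduce the coupled moving-boundary system \eqref{newpb} to a single quasilinear evolution equation $\p_t f = F(f, h)$ on the ground space $\Y$, and then verify that this reduced equation fits into the abstract quasilinear parabolic framework of \cite[Chapter~8]{L}.

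For prescribed $(f, \eta, h) \in \V \times \Y \times h^{2+\alpha}(\s)$, with $\eta$ playing the role of $\p_t f$, I would first solve the two elliptic sub-problems sequentially. The first equality in \eqref{kc} prescribes the Neumann datum $-\mu_w\eta/\sqrt{1+f'^2}$ for $u_w$ on $\G(f)$; combined with the Dirichlet datum $h$ on $\G_1$, this uniquely determines $u_w\in h^{2+\alpha}(\Omega_w(f))$ via the mixed Laplace problem. The Laplace--Young condition \eqref{LY} then fixes the Dirichlet trace of $u_m$ on $\G(f)$, and together with $u_{m,2}=0$ on $\G_{-1}$ the quasilinear divergence-form equation for $u_m$ becomes a mixed Dirichlet--Neumann problem in $\Omega_m(f)$; the monotonicity inherent in \eqref{viscosity}--\eqref{viscosity'} and the Hölder theory for quasilinear elliptic equations furnish a unique solution $u_m[f,\eta,h]\in h^{2+\alpha}(\Omega_m(f))$ depending smoothly on its arguments. (The constraint $\int_\s \eta = 0$ implied by $\eta\in\Y$ is precisely the compatibility condition for the effective Neumann data around $\partial\Omega_m(f)$.) The remaining identity in \eqref{kc} amounts to the single scalar equation
\[
\Phi(f,\eta,h) := \eta + \frac{\sqrt{1+f'^2}}{\mu_m(|\nabla u_m[f,\eta,h]|^2)}\,\p_\nu u_m[f,\eta,h]\big|_{\G(f)} = 0 \qquad \text{in } \Y.
\]

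At the rest state $(0,0,\ov h)$ one has $u_m\equiv\mathrm{const}$, so $\p_\eta\Phi(0,0,\ov h)$ admits the explicit form $\mathrm{id}_\Y + K$, with $K$ quantitatively controlled by the ratio $2r\mu_m'(r)/\mu_m(r)$ through \eqref{viscosity'}. When $\mu_m'\geq 0$, bound \eqref{viscosity'} forces $\mathrm{id}_\Y+K$ to be invertible on all of $\Y$; otherwise, invertibility can only be enforced in a small neighbourhood $\zz$ of zero in $\Y$, which is the source of the smallness condition on $\p_t f$. The implicit function theorem then produces open neighbourhoods $\oo_f\subset \X$, $\zz\subset \Y$, $\oo_{\ov h}\subset h^{2+\alpha}(\s)$ and a smooth map $F:\oo_f\times\oo_{\ov h}\to\Y$ characterised by $\Phi(f,F(f,h),h)\equiv 0$, so that the reduced problem $\p_t f = F(f,h)$, $f(0)=f_0$, is equivalent to \eqref{newpb} on any interval where $\p_t f\in\zz$ and $h\in\oo_{\ov h}$.

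It remains to show that the reduced equation is of quasilinear parabolic type on $(\Y,\X)$. Differentiating $F$ at the flat equilibrium through the chain of elliptic operators, one identifies $A_0:=-\p_fF(0,\ov h)\in\kL(\X,\Y)$ as a nonlocal Fourier multiplier on $\s$ obtained as the composition of the Dirichlet-to-Neumann map of the strip $\Omega$ with the linearisation $f\mapsto-\gamma f'' + g(\rho_m-\rho_w)f$ of the Laplace--Young operator (up to positive smooth coefficients depending on $\mu_m(0)$ and $\mu_w$). Under \eqref{asump} the resulting symbol is bounded below by a positive multiple of $|\xi|^3$ if $\gamma>0$ and of $|\xi|$ if $\gamma=0$, which precisely matches the loss of regularity $\X\hookrightarrow\Y$; stability of small Hölder spaces under the continuous interpolation functor $(\cdot|\cdot)_{\theta,\infty}^0$ then permits one to conclude that $-A_0$ generates an analytic semigroup on $\Y$ with domain $\X$. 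Invoking \cite[Theorem~8.1.1]{L} yields the unique maximal solution claimed in Theorem~\ref{thmain}. The principal obstacle is exactly this shape-differentiation producing $A_0$: the quasilinear solution operator for $u_m$ must be composed with the Dirichlet-to-Neumann map of the Laplace equation and with the linearisation of the curvature through a flattening transformation, and the positivity assumption \eqref{asump} enters precisely when verifying the spectral bounds needed to secure parabolicity.
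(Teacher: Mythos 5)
Your reduction follows essentially the paper's route: solve the two elliptic subproblems for given $(f,\p_tf,h)$, write the remaining kinematic condition as a single equation $\cF(h,f,\p_tf)=0$ in $\Y$, apply the implicit function theorem at the flat state $(\ov h,0,0)$, and verify that the linearization of the reduced evolution equation at the equilibrium generates an analytic semigroup so that the theory of Chapter~8 in \cite{L} applies. There is, however, a genuine gap in where you locate the role of the sign of $\mu_m'$, and this gap is exactly the final assertion $\zz=\Y$. At the rest state the mud potential is constant, so $\nabla u_m\equiv0$ and the derivative $\p_F\cF(\ov h,0,0)$ only sees the value $\mu_m(0)$: it is the Fourier multiplier with symbol $1+(\mu_w/\mu_m(0))\tanh^2(k)$, cf.\ \eqref{pF}, hence an isomorphism of $\Y$ for \emph{every} admissible viscosity, shear thinning or not. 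Consequently the implicit function theorem always yields only a \emph{local} neighborhood $\zz$ of $0$ in the $F$-variable, and your dichotomy (``$\mathrm{id}_\Y+K$ invertible on all of $\Y$ iff $\mu_m'\geq0$'') conflates invertibility of a bounded linear operator with global uniqueness for the nonlinear equation in $F$; even everywhere-invertibility of $\p_F\cF$ would not by itself give global injectivity in infinite dimensions. The claim $\zz=\Y$ when $\mu_m'\geq0$ requires a separate nonlinear uniqueness argument which your proposal does not contain: in the paper one assumes $\cF(h,f,F_1)=\cF(h,f,F_2)=0$, forms the differences $U_w$, $U_m$ of the corresponding potentials, applies the strong maximum principle and Hopf's lemma at a boundary extremum on $\Gamma(f)$, and uses the strict monotonicity of $x\mapsto x/\mu_m(x^2+r^2)$ — this is precisely where $\mu_m'\geq0$ together with \eqref{viscosity'} enters — to conclude $F_1=F_2$. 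Without this (or an equivalent) argument your proof delivers the theorem only with some small neighborhood $\zz$, not with $\zz=\Y$.

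Two smaller points. The reduced equation $\p_tf=\Phi(h(t),f)$ is fully nonlinear and nonautonomous ($h$ depends on $t$), so the relevant abstract result is the nonautonomous fully nonlinear theorem (Theorem~8.4.1 in \cite{L}) rather than the autonomous Theorem~8.1.1 you invoke; and the generator property of $\p_f\Phi(\ov h,0)$ is obtained from the explicit Fourier multiplier symbol \eqref{lk} via a multiplier theorem, while the stability of the small H\"older spaces under continuous interpolation serves to verify the hypotheses on the pair $(\Y,\X)$, not to produce the semigroup itself. These are presentational and fixable; the missing uniqueness argument behind $\zz=\Y$ is the substantive gap.
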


\begin{rem}\label{R:1}
In contrast to the constant viscosity case studied in \cite{EM} where, at any time $t_0$ where the solution exists, the time derivative  $\p_tf(t_0)$ 
 is uniquely determined by the boundary conditions and  $f(t_0),$
the situation considered herein is more complex and it is not clear whether this feature is preserved when the viscosity $\mu_m$ is decreasing.
Particularly, when considering  the fluid mud model \eqref{hectorite} from \cite{T}, $\p_tf$ is uniquely determined by $f$ under the smallness condition  $\p_tf\in \zz, $ cf. Proposition \ref{P:4}.
\end{rem}

\section{The  transformed problem}\label{S3}
The difficulty when dealing with problem \eqref{newpb} is mainly due to the fact that the equations  of \eqref{newpb}
are defined on domains which evolve in time.
To overcome this problem, we shall transform the system by writing its equations on fixed domains.
This will transform though the differential operators into operators with coefficients depending on the unknown $f$. 
 Then, we will introduce  solution operators  to linear and quasilinear elliptic 
boundary value problems involving these transformed differential operators and study their regularity properties.
At the end we obtain at an operator equation for the function $f$ and its time derivative. 

To simplify our notation, we introduce first the quasilinear operator $\Q: C^2(\Omega_m) \to C(\Omega_m)$  by setting
\[
 \Q u_m:= \nabla \left(\displaystyle\frac{\nabla u_m}{\mu_m(|\nabla u_m|^2)}\right) \quad \text{for $u_m\in C^2(\Omega_m).$}
\]
For $u_m\in C^2(\Omega_m)$,  one easily  computes that $\Q u=a_{ij}(\nabla u_m)u_{m,ij},$
with  coefficients $a_{ij}, 1\leq i,j\leq 2,$ given by 
\begin{align*}
 a_{ij}(z)=\frac{\delta_{ij}}{\mu_m(|p|^2)}-\frac{2z_iz_j\mu_m'(|z|^2)}{\mu_m^2(|z|^2)} \qquad \text{ for $z=(z_1,z_2)\in \R^2.$}
\end{align*}
 Given $z\in\R^2,$ the eigenvalues of the matrix  $(a_{ij}(z))_{1\leq i,j\leq 2}$ are
\begin{align*}
 \lambda_1(z)=\frac{1}{\mu_m(|z|^2)}, \quad \lambda_2(z)=\frac{1}{\mu_m(|z|^2)}-\frac{2|z|^2\mu_m'(|z|^2)}{\mu_m^2(|z|^2)},
\end{align*}
and it follows readily from the fact that $\mu_m$ satisfies the relations \eqref{viscosity'} that the eigenvalues are positive and uniformly bounded in $z\in\R^2$,
i.e. $\Q$ is a uniformly elliptic quasilinear  operator.

From now on, we set $\Omega_m:=\Omega_m(0),$ $\Omega_w:=\Omega_w(0)$, and will identify  the common boundary   $\Gamma_0:=\s\times \{0\}$ of $\Omega_m$ and $\Omega_w$
with the unit circle $\s.$ 
Given $f\in \V,$ we define the mapping $\phi_f:\Omega \to \Omega$ by the relation 
\begin{align*} 
 \phi_f(x,y):=(x, y+(1-y^2)f(x))\qquad\text{for $ (x,y)\in \Omega.$} 
\end{align*}
Since functions $f\in \V$ satisfy $||f||_{C(\s)}<1/2$, one can easily verify that the function 
\[
\0\ni (x,y)\to \left(x, \frac{1-\sqrt{1-4yf(x)+4f^2(x)}}{2f(x)} \right)\in\0
\]
is the inverse of $\phi_f,$ that is $\phi_f$ is a diffeomorphism  having the same regularity   as $f$.
Moreover, we have that
$\phi_f(\0_m)=\0_m(f)$, $\phi_f(\0_w)=\0_w(f),$  and $\phi_f(\G_0)=\G(f).$
To keep the notation simple, we shall also denote by $\phi_f$  the diffeomorphisms obtained by restricting   $\phi_f$ to $\0_m$  and $\0_w.$

Using these diffeomorphisms, we re-express  now the equations of \eqref{newpb} on the fixed reference domains $\Omega_m$, $\Omega_w,$ and their boundaries.
To this end,   we introduce for each $f\in\V$ the operator $\A_w(f): h^{2+\alpha}(\Omega_w)\to h^{\alpha}(\Omega_w)$ by setting
\begin{align*}
 \A_w(f)v_w:=  \left(\Delta (v_w\circ\phi_{f}^{-1})\right)\circ\phi_f \qquad\text{for $v_w\in h^{2+\alpha}(\Omega_w)$.}
\end{align*}
A simple computation yields that 
\begin{align*}
  \A_w(f)v_w =&  \frac{\p^2v_w}{\p x^2}+2\frac{f'(y^2-1)}{1-2yf}\frac{\p^2 v_w}{\p x \p y}+\left(\frac{f'^2(y^2-1)^2}{(1-2yf)^2}+
\frac{1}{(1-2yf)^2}\right)\frac{\p^2 v_w}{\p y^2}\\
&+ \left( \frac{f''(y^2-1)}{1-2yf}+\frac{4f'^2y(y^2-1)}{(1-2yf)^2}+\frac{2ff'^2(y^2-1)^2+2f}{(1-2yf)^3} \right) \frac{\p v_w}{\p y},
\end{align*}
for $v_w \in h^{2+\alpha}(\Omega_w).$
Consequently, one can  see that $\A_w(f)$ is uniformly elliptic and that it depends analytically on  $f$, that is $\A_w\in C^\omega(\V,\kL(h^{2+\alpha}(\Omega_w), h^{\alpha}(\Omega_w))).$ 
Furthermore, corresponding to the operator $\Q,$ we let $\A_m:\V\times h^{2+\alpha}(\0_m)\to h^{\alpha}(\0_m)$ be the operator with 
\begin{align*}
 \A_m(f,v_m):=  \left(\Q (v_m\circ\phi_{f}^{-1})\right)\circ\phi_f \qquad \text{for $ v_m\in h^{2+\alpha}(\Omega_m)$}.
\end{align*}
\begin{lemma}\label{L:1} The operator $\A_m$ is smooth $\A_m\in C^\infty(\V\times h^{2+\alpha}(\0_m), h^{\alpha}(\0_m))$ and has a quasilinear structure 
$\A_m(f,v_m)= b_{ij}(y,f, \nabla_f v_m) v_{m,ij}+b(y,f, \nabla_f v_m) v_{m,2}$  whereby 
\[
\nabla_f v_m:=\left(v_{m,1}+\frac{f'(y^2-1)}{1-2yf} v_{m,2}, \frac{1}{1-2yf}  v_{m,2}\right), 
\]
and the coefficients of the operator are given by
\begin{align*}
 b_{11}(y,f, \nabla_f v_m) =& a_{11}(\nabla_fv_m),\\
 b_{12}(y,f, \nabla_f v_m)  =& \frac{f'(y^2-1)}{1-2yf}a_{11}(\nabla_f v_m)+\frac{1}{1-2yf}a_{12}(\nabla_f v_m),\\
 b_{22}(y,f, \nabla_f v_m) = &\frac{f'^2(y^2-1)^2}{(1-2yf)^2}a_{11}(\nabla_fv_m)
+2\frac{f'(y^2-1)}{(1-2yf)^2}a_{12}(\nabla_f v_m)+\frac{1}{1-2yf}a_{22}(\nabla_f v_m),\\
 b(y,f, \nabla_f v_m) = &\left[ \frac{f''(y^2-1)}{1-2yf}+\frac{4f'^2y(y^2-1)}{(1-2yf)^2}+\frac{2ff'^2(y^2-1)^2}{(1-2yf)^3}\right] a_{11}(\nabla_fv_m)\\
&+\frac{4f'y-4ff'(1+y^2)}{(1-2yf)^3}a_{12}(\nabla_fv_m)+\frac{2f}{(1-2yf)^3}a_{22}(\nabla_fv_m),
\end{align*}
for $ f\in \V, $ $v_m\in h^{2+\alpha}(\Omega_m).$
\end{lemma}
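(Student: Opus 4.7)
The plan is to verify the stated formulas by a direct chain-rule calculation and then deduce smoothness from standard properties of substitution and multiplication in small Hölder spaces. Setting $u_m:=v_m\circ\phi_f^{-1}$ and using $\phi_f(x,y)=(x,y+(1-y^2)f(x))$, differentiation of the identity $v_m=u_m\circ\phi_f$ gives $v_{m,1}=u_{m,1}(\phi_f)+(1-y^2)f'\,u_{m,2}(\phi_f)$ and $v_{m,2}=(1-2yf)\,u_{m,2}(\phi_f)$. Since $\|f\|_{C(\s)}<1/2$ forces $1-2yf>0$ on $\overline{\Omega_m}$, this triangular linear system can be solved for $(\nabla u_m)(\phi_f)$, producing exactly the vector $\nabla_f v_m$ from the statement. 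In particular, $a_{ij}(\nabla u_m)\circ\phi_f=a_{ij}(\nabla_f v_m)$.

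Next I would differentiate the two identities above once more with respect to $x$ and $y$, and solve the resulting triangular system for $u_{m,11}(\phi_f)$, $u_{m,12}(\phi_f)$, $u_{m,22}(\phi_f)$. This gives explicit expressions for these second derivatives as combinations of $v_{m,11}$, $v_{m,12}$, $v_{m,22}$, and the lower-order term $v_{m,2}$, with coefficients rational in $y,f,f',f''$ whose only denominators are positive powers of $1-2yf$. Substituting them into $\Q u_m=a_{ij}(\nabla u_m)u_{m,ij}$ evaluated at $\phi_f$, replacing $a_{ij}(\nabla u_m)(\phi_f)$ by $a_{ij}(\nabla_f v_m)$, and collecting the coefficients of $v_{m,11}$, $v_{m,12}$, $v_{m,22}$, and $v_{m,2}$ produces precisely the claimed formulas for $b_{11}$, $b_{12}$, $b_{22}$, and $b$; no zeroth-order term in $v_m$ survives, since $\Q$ is in divergence form.

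For the smoothness assertion, I would observe that on any sufficiently small $h^{2+\alpha}$-neighborhood of a given $f\in\V$ the factor $1-2yf$ stays bounded away from zero uniformly on $\overline{\Omega_m}$, so that the rational combinations of $f,f',f''$ entering the $b_{ij}$ and $b$ define analytic maps from $\V$ into the Banach algebra $h^{\alpha}(\Omega_m)$. The map $(f,v_m)\mapsto\nabla_f v_m$ is analytic in $f$ and linear in $v_m$ with values in $h^{1+\alpha}(\Omega_m;\R^2)$, and since $\mu_m,\mu_m'\in C^\infty([0,\infty))$ the associated Nemitsky operators $w\mapsto\mu_m(|w|^2)$ and $w\mapsto\mu_m'(|w|^2)$ act smoothly from $h^{1+\alpha}(\Omega_m;\R^2)$ into $h^{1+\alpha}(\Omega_m)\hookrightarrow h^{\alpha}(\Omega_m)$, a standard property of the small-Hölder functor recalled in \cite{L}. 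Composing these with $\nabla_f v_m$ and multiplying with $v_{m,ij}\in h^{\alpha}(\Omega_m)$ in the Banach algebra $h^{\alpha}(\Omega_m)$ yields $\A_m\in C^\infty(\V\times h^{2+\alpha}(\Omega_m),h^{\alpha}(\Omega_m))$. The chain-rule bookkeeping in the first two steps is tedious but routine; the main conceptual point is the Nemitsky-operator statement in the last step, which is where the smoothness of $\mu_m$ and the openness of $\V$ are both essential.
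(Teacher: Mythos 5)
Your proposal is correct and follows essentially the same route as the paper, whose proof simply asserts that the quasilinear structure follows from the definitions of $\Q$ and $\A_m$ via the chain rule and that smoothness follows from the smoothness of $\mu_m$. Your write-up merely fills in the details the paper leaves implicit (solving the triangular system for $(\nabla u_m)\circ\phi_f$, the positivity of $1-2yf$ on $\ov{\Omega_m}$, and the standard smoothness of substitution and multiplication operators on small H\"older spaces), so there is no substantive difference in approach.
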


\begin{proof}
The quasilinear structure of $\A_m$ is found by using the definition of $\Q$ and $\A_m.$ 
Then, taking into account that $\mu_m$ is smooth, we deduce that $\A_m$ is a smooth function of both variables. 
\end{proof}

Corresponding to the fifth equation of system \eqref{newpb}, we define the boundary operators
 $\B_w(f):h^{2+\alpha}(\Omega_w)\to {\it h}^{1+\alpha}(\s)$  and  $\B_m:\V\times h^{2+\alpha}(\Omega_m)\to {\it h}^{1+\alpha}(\s)$
by setting
\begin{align*}
&B_w(f)v_w=\frac{1}{\mu_v} \tr \left(\langle \nabla (v_w\circ\phi_{f}^{-1})   | (-f',1) \rangle\circ \phi_f\right), \quad v_w\in h^{2+\alpha}(\Omega_w),\\
&B_m(f,v_m)=  \tr \left(\left\langle \frac {\nabla (v_m\circ\phi_{f}^{-1})   }{\mu_m (|\nabla (v_m\circ\phi_{f}^{-1})|^2)}  | (-f',1) \right\rangle\circ \phi_f\right), \quad (f,v_m)\in \V\times h^{2+\alpha}(\Omega_m),
\end{align*}
respectively, where $\tr$ is the trace operator with respect to $\s=\Gamma_0.$ 
More precisely, we have that 
\begin{align}\label{Bw}
 \B_w(f)v_w=\mu_w^{-1}\left[(1+f'^2)\tr v_{w,2}-f' \tr v_{w,1} \right]  \qquad\text{for $v_w\in h^{2+\alpha}(\Omega_w),$}
\end{align}
so that $\B_w(f)$ depends analytically on $f,$ i.e. $\B_w \in C^\omega(\V, \kL(h^{2+\alpha}(\Omega_w), h^{1+\alpha}(\s))).$
Concerning the  boundary operator $\B_m$, we get
\begin{align}\label{Bm}
 \B_m(f, v_m)= \displaystyle\frac{(1+f'^2)  v_{m,2}-f'\tr  v_{m, 1}}{\mu_m\left( 
\tr |\nabla _fv_m|^2
\right)} \qquad\text{for $(f,v_m)\in\V\times  h^{2+\alpha}(\Omega_m)$,}
\end{align}
whereby $\tr \nabla _fv=(v_{m,1}-f' v_{m,2}, v_{m,2}).$
From \eqref{viscosity'}  together with the smoothness   of $\mu_m,$ we deduce that $\B_m$ is a smooth operator 
$ \B_m \in C^\infty(\V\times h^{2+\alpha}(\Omega_m), h^{1+\alpha}(\s)).$
Letting $v_w=u_w\circ \phi_f$ and $v_m=u_m\circ \phi_f,$ we see that the tuple $(f,v_w,v_m) $ satisfies the following system of equations
\begin{equation}\label{TP}
\left\{ 
\begin{array}{rllllll}
  \A_w(f)v_w&=&0 \qquad &\text{in} \quad \Omega_w, \\
  \A_m(f,v_m)&=&0 \qquad &\text{in} \quad \Omega_m,\\
  v_w&=&h(t,x) \qquad &\text{on} \quad \Gamma_1,\\
  v_{m,2}&=&0 \qquad &\text{on} \quad \Gamma_{-1},\\
  v_w-v_m&=& \gamma \kappa(f)+g(\rho_w-\rho_m)f \qquad &\text{on} \quad \Gamma_0,\\
  \p_t f=-\B_w(f)v_w&=& -\B_m(f,v_m) \qquad &\text{on} \quad \Gamma_0,\\
  f(0)&=& f_0,
 \end{array}
\right.
\end{equation}
pointwise. 
When $\gamma>0,$ we set  $\kappa: \V\to h^{2+\alpha}_0(\s)$ to be defined by $\kappa(f)= f''/(1+f'^2)^{3/2}$ for $f\in \V$.
Let us observe that the curvature operator $\kappa$ is also a real-analytic function
 $\kappa\in C^\omega(\V, h^{2+\alpha}_0(\s))$.
 We enhance that the equations of \eqref{TP} are all defined on sets which are fixed in time.
 
Defining the notion of classical solutions for \eqref{TP} in a similarly way to  that for \eqref{newpb}, we have the following equivalence between 
 problems \eqref{newpb} and \eqref{TP}.
\begin{lemma}\label{equi}
If  $(f, u_w, u_m)$ is  a solution of \eqref{newpb}, then 
the tuple $(f, u_w\circ \phi_f, u_m\circ \phi_f)$ is a solution of \eqref{TP}.
Conversely, if $(f, v_w, v_m)$ is a solution of \eqref{TP}, then $(f, v_w\circ\phi_f^{-1},  v_m\circ\phi_f^{-1})$ is a  solution 
of \eqref{newpb}.
\end{lemma}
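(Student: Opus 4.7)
The plan is to verify the equivalence equation by equation, using the fact that the operators $\A_w, \A_m, \B_w, \B_m$ and the curvature operator $\kappa$ have been set up precisely so that the change of variables via $\phi_f$ maps the two systems to each other. Three geometric facts drive the argument: (i) $\phi_f$ is a diffeomorphism with $\phi_f(\Omega_w) = \Omega_w(f)$, $\phi_f(\Omega_m) = \Omega_m(f)$, and $\phi_f(\Gamma_0) = \Gamma(f)$, with the same regularity as $f$; (ii) the factor $1-y^2$ vanishes on $\Gamma_{\pm 1}$, so $\phi_f$ restricts to the identity there; and (iii) the vertical Jacobian $\p_y \phi_f = 1 - 2yf$ is strictly positive on $\overline\Omega$ since $\|f\|_{C(\s)} < 1/2$, and reduces to $1 \mp 2f$ on $\Gamma_{\pm 1}$.

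I would treat only the forward direction in detail, since the converse follows by applying the same argument to $\phi_f^{-1}$, which has the same regularity as $\phi_f$ by the explicit formula recorded earlier in the section. Setting $v_w := u_w \circ \phi_f$ and $v_m := u_m \circ \phi_f$, the two interior equations $\A_w(f)v_w = 0$ and $\A_m(f,v_m) = 0$ are tautological from the definitions of $\A_w$ and $\A_m$. On $\Gamma_1$ and $\Gamma_{-1}$ the boundary conditions transfer via (ii) and (iii): $v_w = u_w = h$ on $\Gamma_1$ because $\phi_f$ is the identity there, while on $\Gamma_{-1}$ the chain rule gives $v_{m,2} = (1+2f)(u_{m,2}\circ\phi_f)$ with $1+2f>0$. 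On $\Gamma_0$ the Laplace--Young equation transfers because $\phi_f(x,0) = (x,f(x))$, $\kappa(f) = \kappa_{\Gamma(f)} \circ \phi_f|_{\Gamma_0}$ by construction of $\kappa$, and the height coordinate $y$ restricted to $\Gamma(f)$ equals $f(x)$.

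The only step that requires an actual computation is the kinematic boundary condition. A direct chain-rule evaluation at $y=0$ yields $\tr v_{w,1} = u_{w,1} + f' u_{w,2}$ and $\tr v_{w,2} = u_{w,2}$, all read off on $\Gamma(f)$. Substituting into \eqref{Bw} and simplifying gives
\begin{equation*}
\mu_w \B_w(f)v_w \;=\; u_{w,2} - f' u_{w,1} \;=\; \bigl\langle \nabla u_w,\,(-f',1)\bigr\rangle\big|_{\Gamma(f)} \;=\; \sqrt{1+f'^2}\,\p_\nu u_w\big|_{\Gamma(f)},
\end{equation*}
so that $-\B_w(f)v_w = \p_t f$ matches the water-side kinematic equation of \eqref{newpb}. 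The analogous identity for $\B_m$ follows by the same computation once one observes that formula \eqref{Bm} was designed with $\mu_m$ evaluated at $\tr |\nabla_f v_m|^2$ so as to match $\mu_m(|\nabla u_m|^2)|_{\Gamma(f)}$ after composition with $\phi_f$; indeed the definition of $\nabla_f v_m$ in Lemma \ref{L:1} is precisely the pullback of $\nabla u_m$ under $\phi_f$.

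I expect no genuine obstacle here: the whole proof is a bookkeeping exercise once the three geometric observations above are in place and the boundary operators $\B_w, \B_m$ are written out explicitly as in \eqref{Bw}--\eqref{Bm}. The one point worth double-checking is the matching of the viscosity argument on the mud side, but this reduces to the identity $|\nabla u_m|^2 \circ \phi_f = |\nabla_f v_m|^2$, which is immediate from the chain rule.
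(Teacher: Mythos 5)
Your pointwise, equation-by-equation verification is correct as far as it goes (and your chain-rule computation does reproduce \eqref{Bw}--\eqref{Bm}), but it is not where the content of the lemma lies: the paper disposes of the pointwise part in one sentence (``as we already noticed''), since the operators $\A_w,\A_m,\B_w,\B_m$ and $\kappa$ were constructed exactly for this purpose. What your proposal misses entirely is the regularity requirement built into the notion of classical solution: to conclude that $(f,u_w\circ\phi_f,u_m\circ\phi_f)$ solves \eqref{TP} one must show $v_w:=u_w\circ\phi_f\in h^{2+\alpha}(\Omega_w)$ and $v_m:=u_m\circ\phi_f\in h^{2+\alpha}(\Omega_m)$, i.e.\ membership in the \emph{small} H\"older spaces, which are defined as closures of the smooth functions in $C^{2+\alpha}$. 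This is also needed simply to place $v_w,v_m$ in the domains of $\A_w(f)$, $\B_w(f)$, $\A_m(f,\cdot)$, $\B_m(f,\cdot)$. Composing $u_w\in h^{2+\alpha}(\Omega_w(f))$ with the diffeomorphism $\phi_f$, which is itself only as regular as $f$, gives a function in $C^{2+\alpha}(\ov{\Omega_w})$, but approximability by smooth functions in the $C^{2+\alpha}$ norm is \emph{not} automatic under composition with a merely $C^{2+\alpha}$ change of variables; ``$\phi_f$ has the same regularity as $f$'' does not settle this point, and your proof never addresses it.

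The paper's proof is devoted precisely to this step: one picks smooth $f_n\to f$ in $C^{2+\alpha}(\s)$ with $f_n>f$ (so that $\0_w(f_n)\subset\0_w(f)$ and the compositions below are defined on all of $\ov{\Omega_w}$) and smooth $u_w^p\to u_w$ in $C^{2+\alpha}(\ov{\Omega_w(f)})$, and then estimates
$\|u_w^p\circ\phi_{f_n}-v_w\|_{C^{2+\alpha}(\ov{\Omega_w})}\le C\bigl(\|u_w^p\|_{C^{4}(\ov{\Omega_w})}\|f_n-f\|_{C^{2+\alpha}(\s)}+\|u_w^p-u_w\|_{C^{2+\alpha}(\ov{\Omega_w})}\bigr)$,
which exhibits $v_w$ as an accumulation point of smooth functions and hence as an element of $h^{2+\alpha}(\Omega_w)$; the same argument applies to $v_m$, and the converse direction uses $\phi_f^{-1}$ analogously. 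You need to supply an argument of this type (a two-parameter approximation of both the potential and the diffeomorphism, with the one-sided choice $f_n>f$ ensuring the compositions are well defined); without it the transformed tuple is not known to be a solution of \eqref{TP} in the sense defined, and the lemma is not proved.
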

\begin{proof}
Assume that  $(f, u_w, u_m)$ is  a solution of \eqref{newpb}. 
As we already noticed, the tuple $(f, u_w\circ \phi_f, u_m\circ \phi_f)$  solves \eqref{TP} pointwise.
We show now that  $v_w:= u_w\circ \phi_f$ has the required regularity $v_w\in h^{2+\alpha}(\Omega_w).$
To this end, let $(f_n)\subset C^\infty(\s)$ be a sequence such that $f_n\to f$ in $C^{2+\alpha}(\s).$
We can choose $(f_n)$ such that in fact $f_n>f$ for all $n\in\N $  (we do not require that $f_n\in\V$).
If $(u_w^p)\in C^{\infty}(\ov{\Omega_w(f)})$ is a sequence such that $u_w^p \to u_w,$ then due to $\0_w(f_n)\subset \0_w(f)$,
the function $u_w^p\circ\phi_{f_n} $ is well-defined and $u_w^p\circ\phi_{f_n}\in C^{\infty}(\ov{\Omega_w}).$
The function $v_w$ is an accumulation point of the set $\{u_w^p\circ\phi_{f_n}\,:\, p,n\in\N\}:$
\begin{align*}
\|u_w^p\circ\phi_{f_n}-v_w\|_{C^{2+\alpha}(\ov{\Omega_w})}&\leq \|u_w^p\circ\phi_{f_n}-u_w^p\circ\phi_{f}\|_{C^{2+\alpha}(\ov{\Omega_w})}
+\|(u_w^p-u_w)\circ\phi_{f}\|_{C^{2+\alpha}(\ov{\Omega_w})}\\
&\leq C \left(\|u_w^p\|_{C^{4}(\ov{\Omega_w})}\|f_n-f\|_{C^{2+\alpha}(\s)}+
+\|u_w^p-u_w\|_{C^{2+\alpha}(\ov{\Omega_w})}\right),
\end{align*}
so that indeed $v_w\in h^{2+\alpha}(\Omega_w).$  
Analogously, $v_m\in h^{2+\alpha}(\Omega_m) $ and we proved the first claim. 
The  reciprocal implication follows by similar reasons. 
\end{proof}

\subsection{The abstract formulation of the problem}
We enhance that even if the transformation performed in the previous paragraph 
has the hindrance of introducing additional nonlinear coefficients, it allows us to re-write the problem
as a single nonlinear equation.
To do this, we define in the following solution operators to elliptic boundary value problems which are closely related to  system \eqref{TP}.
They are used  later on to  formulate the original problem \eqref{TP} as a single operator equation. 
\begin{lemma}\label{LS1}
Given $f\in \V$ and $(F, h)\in h^{1+\alpha}(\s)\times h^{2+\alpha}(\s),$ the problem
\begin{equation}\label{T}
 \left\{
\begin{array}{rllll}
 \A_w(f)v_w&=&0 \qquad &\text{in} \quad \Omega_w,\\
 \B_w(f)v_w&=& F \qquad &\text{on} \quad \Gamma_0,\\
 v_w&=& h \qquad &\text{on} \quad \Gamma_1.
\end{array}
\right.
\end{equation}
possess a unique solution  $\T(f)[F, h]\in h^{2+\alpha}(\Omega_w).$
Moreover, the operator $\T$ is real-analytic $\T\in C^\omega(\V, \kL(h^{1+\alpha}(\s)\times h^{2+\alpha}(\s), h^{2+\alpha}(\0_w))).$
\end{lemma}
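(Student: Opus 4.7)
\smallskip

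\textbf{Plan.} The plan is to handle existence/uniqueness for each fixed $f$ via the diffeomorphism $\phi_f$ and classical elliptic theory, and then to obtain analytic dependence on $f$ by viewing the problem as inversion of an analytic family of bounded operators.

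\smallskip

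First I would package \eqref{T} as a single linear operator equation. For $f\in\V$ define
\[
L(f):h^{2+\alpha}(\Omega_w)\longrightarrow h^{\alpha}(\Omega_w)\times h^{1+\alpha}(\s)\times h^{2+\alpha}(\s),\qquad
L(f)v_w:=\bigl(\A_w(f)v_w,\;\B_w(f)v_w,\;\tr_{\Gamma_1}v_w\bigr).
\]
Solving \eqref{T} amounts to showing that $L(f)$ is a topological isomorphism and setting $\T(f)[F,h]:=L(f)^{-1}(0,F,h)$. Because $\A_w(f)$ and $\B_w(f)$ are constructed by composition with the diffeomorphism $\phi_f$, and solving $L(f)v_w=(g,F,h)$ is equivalent (via $u_w:=v_w\circ\phi_f^{-1}$) to the mixed boundary value problem on the original domain $\Omega_w(f)$,
\[
\Delta u_w=g\circ\phi_f^{-1}\ \text{in}\ \Omega_w(f),\qquad
\mu_w^{-1}\langle\nabla u_w,(-f',1)\rangle=F\ \text{on}\ \G(f),\qquad
u_w=h\ \text{on}\ \G_1,
\]
which, by the explicit formula for $\A_w(f)$ displayed above, is uniformly elliptic in $\0_w$, with an oblique-derivative boundary condition on $\Gamma_0$ whose transverse part $\mu_w^{-1}(1+f'^2)>0$ satisfies the Lopatinskii covering condition, and a Dirichlet condition on $\Gamma_1$.

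\smallskip

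Next I would establish that $L(f)$ is an isomorphism for every $f\in\V$. By Schauder theory for mixed problems of oblique-derivative/Dirichlet type (applicable because the coefficients of $\A_w(f)$ and $\B_w(f)$ are in $h^{\alpha}$ and the domain has $h^{2+\alpha}$ boundary), the operator $L(f)$ is Fredholm of index zero. Injectivity follows from the standard energy identity on $\Omega_w(f)$: a function $u_w\in h^{2+\alpha}(\Omega_w(f))$ with $\Delta u_w=0$, zero Dirichlet data on $\Gamma_1$ and vanishing normal derivative on $\G(f)$ (equivalently $\B_w(f)v_w=0$) satisfies $\int_{\Omega_w(f)}|\nabla u_w|^2=0$, whence $u_w\equiv 0$ and thus $v_w\equiv 0$. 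An index-zero Fredholm operator that is injective is bijective, so $L(f)$ is a Banach-space isomorphism, $\T(f)$ is well defined, and the regularity statement $v_w\in h^{2+\alpha}(\Omega_w)$ is automatic.

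\smallskip

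Finally I would upgrade to real-analyticity. We already noted $\A_w\in C^\omega(\V,\kL(h^{2+\alpha}(\Omega_w),h^{\alpha}(\Omega_w)))$ and $\B_w\in C^\omega(\V,\kL(h^{2+\alpha}(\Omega_w),h^{1+\alpha}(\s)))$ from their explicit polynomial/rational dependence on $f,f',f''$. Combined with the trivial dependence of $\tr_{\Gamma_1}$, this gives $L\in C^\omega(\V,\kL(\X_1,\Y_1))$ with $\X_1:=h^{2+\alpha}(\Omega_w)$ and $\Y_1$ the target space of $L(f)$. Since the inversion map is real-analytic on the open subset of isomorphisms in $\kL(\X_1,\Y_1)$, composition yields $f\mapsto L(f)^{-1}\in C^\omega(\V,\kL(\Y_1,\X_1))$; restricting to the slice $\{0\}\times h^{1+\alpha}(\s)\times h^{2+\alpha}(\s)$ gives $\T\in C^\omega(\V,\kL(h^{1+\alpha}(\s)\times h^{2+\alpha}(\s),h^{2+\alpha}(\0_w)))$.

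\smallskip

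The only genuine obstacle I see is verifying the Schauder a priori estimate for the mixed oblique/Dirichlet problem in the little-H\"older scale. This is standard once one knows that the problem satisfies the Lopatinskii condition (clear here, as the $y$-component of the oblique field is $\mu_w^{-1}(1+f'^2)>0$) and that $h^{2+\alpha}$ is stable under the relevant boundary operators; the arguments then follow Lunardi's treatment of mixed boundary problems in small H\"older spaces cited in the paper.
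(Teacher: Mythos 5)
Your proposal is correct and follows the same basic architecture as the paper: solvability is obtained by transporting \eqref{T} via $\phi_f$ to the mixed Neumann--Dirichlet problem on $\Omega_w(f)$, the solution operator is realized as the inverse of $(\A_w(f),\B_w(f),\tr_{\Gamma_1})$, and real-analyticity in $f$ comes from the analyticity of $\A_w$, $\B_w$ together with the analyticity of operator inversion on the open set of isomorphisms. You add two details the paper leaves implicit (the Fredholm index-zero argument and the energy identity for uniqueness), which is fine. The one place where your write-up is too quick is the assertion that the membership $v_w\in h^{2+\alpha}(\Omega_w)$ is ``automatic'': this is precisely the step the paper does not treat as automatic. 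The paper first inverts $(\A_w(f),\B_w(f),\tr_{\Gamma_1})$ on the \emph{classical} H\"older scale $C^{2+\alpha}(\ov{\Omega_w})\to C^{\alpha}\times C^{1+\alpha}\times C^{2+\alpha}$, and then deduces that $\T(f)[F,h]$ lies in the little H\"older space by a separate argument: for smooth $f,F,h$ elliptic regularity gives a $C^\infty(\ov{\Omega_w})$ solution, and density of smooth data together with continuity of $\T(f)$ yields $\T(f)[F,h]\in h^{2+\alpha}(\Omega_w)$ in general. Your alternative --- setting up $L(f)$ directly between little H\"older spaces and invoking Schauder/Fredholm theory there --- is viable, but the surjectivity onto the little H\"older target is exactly the content being proved, and in practice one establishes it by the same approximation argument (or by Lunardi-type results whose proofs contain it); you do flag this at the end, so it is a gloss rather than an error, but it should be carried out rather than labelled automatic.
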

\begin{proof} Given $f\in \V,$ the problem \eqref{T} possesses a unique classical solution  $u_w\circ\phi_f,$
where $u_w$ is the unique solution of the linear elliptic boundary value problem
\begin{equation}\label{elum}
\left\{
\begin{array}{rllllllllllll}
\Delta u_w&=& 0 \qquad &\text{in} \quad   \Omega_w(f), \\
\mu_w^{-1}\sqrt{1+f'^2}\p_\nu u_w&=& F\qquad &\text{on} \quad  \Gamma(f), \\
 u_{w} &=& h  \qquad &\text{on} \quad  \Gamma_{1}.
\end{array}
\right.
\end{equation}
Denoting by $\tr_1$ the trace operator with respect to $\Gamma_1,$ the operator  $\T(f)$ is the inverse of the linear operator
\[
(\A_w(f),\B_w(f), \tr_1):C^{2+\alpha}(\ov{\Omega_w})\to C^{\alpha}(\ov{\Omega_w})\times C^{1+\alpha}(\s)\times C^{2+\alpha}(\s).
\]
Since $\A_w$ and $\B_w$ depend analytically of $f$, and the function associating to a bijective operator its inverse is also real-analytic, 
we get $\T\in C^\omega(\V, \kL(C^{1+\alpha}(\s)\times C^{2+\alpha}(\s), C^{2+\alpha}(\ov{\0_w}))).$
Due to elliptic regularity, cf. Theorem  in \cite{GT}, the solution $v_w$ of \eqref{T} belongs to $C^{\infty}(\ov{\Omega_w}) $ if $f,F,h$ are smooth functions.
By continuity, we conclude that $\T(f)[F, h]\in h^{2+\alpha}(\Omega_w) $ for all $f\in \V$ and $(F, h)\in h^{1+\alpha}(\s)\times h^{2+\alpha}(\s),$
which completes the proof.
\end{proof}  

We enhance that second equation of \eqref{T} corresponds to one of the kinematic boundary condition in \eqref{newpb}.
Whence, if we know $f$ and its derivative $\p_t f$, the function $v_w$ is given by $ v_w=\T(f)[-\p_tf,h].$
We can use now  $v_w$ as a boundary data in the fifth equation of \eqref{TP}  and determine $v_m$ as a function of $f$ and $\p_tf.$
\begin{lemma}\label{LS2}
Given $f\in \V$ and $p\in  h^{2+\alpha}(\s),$ the problem
\begin{equation}\label{TT}
 \left\{
\begin{array}{rllll}
 \A_m(f,v_m)&=&0 \qquad &\text{in} \quad \Omega_m,\\
 v_m&=& p \qquad &\text{on} \quad \Gamma_0,\\
 v_{m,2}&=&0 \qquad &\text{on} \quad \Gamma_{-1}
\end{array}
\right.
\end{equation}
possess a unique solution  $\cR(f, p)\in h^{2+\alpha}(\Omega_m).$
Moreover, the operator $\cR$ is smooth $\cR\in C^\infty(\V \times h^{2+\alpha}(\s), h^{2+\alpha}(\0_w)).$
\end{lemma}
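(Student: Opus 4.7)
My plan is to prove Lemma~\ref{LS2} in two steps: first obtain existence and uniqueness of $\cR(f,p)$ from the variational structure of $\Q$, then derive smooth dependence on $(f,p)$ via the implicit function theorem applied to $\A_m$.

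\textbf{Step 1 (existence and uniqueness).} Composing with $\phi_f^{-1}$, problem \eqref{TT} is equivalent to
\[
\Q u_m = 0 \ \text{in}\ \Omega_m(f), \qquad u_m|_{\G(f)} = p\circ\phi_f^{-1}|_{\G(f)}, \qquad u_{m,2}|_{\G_{-1}}=0,
\]
for $u_m := v_m\circ\phi_f^{-1}$. The operator $-\Q$ is the Euler--Lagrange operator of the functional
\[
J(u) := \int_{\Omega_m(f)} \Psi(|\nabla u|^2)\, dxdy, \qquad \Psi'(r) = \frac{1}{2\mu_m(r)},
\]
whose Hessian with respect to $\nabla u$ is precisely the matrix $(a_{ij}(\nabla u))$. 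The eigenvalue bounds derived from \eqref{viscosity'} imply that $\Psi$ is uniformly convex with quadratic upper and lower bounds, so $J$ is strictly convex and coercive on the affine $H^1$--space of admissible functions, with the condition $u_{m,2}=0$ on $\G_{-1}$ arising as a natural boundary condition. Direct methods produce a unique minimizer, and quasilinear elliptic regularity in divergence form (De Giorgi--Nash--Moser followed by Schauder bootstrapping, in the spirit of \cite{EM1}) lifts it to $h^{2+\alpha}(\Omega_m(f))$. Pulling back through $\phi_f$ yields $\cR(f,p)\in h^{2+\alpha}(\Omega_m)$.

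\textbf{Step 2 (smoothness).} I would apply the implicit function theorem to
\[
\cF: \V \times h^{2+\alpha}(\s) \times h^{2+\alpha}(\Omega_m) \to h^{\alpha}(\Omega_m) \times h^{2+\alpha}(\s) \times h^{1+\alpha}(\s),
\]
\[
\cF(f,p,v) := \bigl(\A_m(f,v),\ \tr_{\G_0} v - p,\ \tr_{\G_{-1}} v_{,2}\bigr),
\]
which is smooth by Lemma~\ref{L:1}. At a zero $(f_0,p_0,v_0)$, the partial Fr\'echet derivative $\p_v\cF(f_0,p_0,v_0)$ sends $w$ to $(L_0 w, \tr w, \tr w_{,2})$, where $L_0$ is the linearization of $\A_m(f_0,\cdot)$ at $v_0$. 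The principal part of $L_0$ has coefficients $b_{ij}(y,f_0,\nabla_{f_0}v_0)$, which, being the $\phi_{f_0}$--pullback of the uniformly positive matrix $(a_{ij})$, form a uniformly positive matrix; the lower-order terms arising from differentiating $b_{ij}(y,f,\nabla_f v_0)$ and $b(y,f,\nabla_f v_0)$ in $\nabla_f v_0$ belong to $h^{\alpha}$ since $v_0\in h^{2+\alpha}(\Omega_m)$ and $\mu_m$ is smooth. Thus $L_0$ is a uniformly elliptic linear operator with little-H\"older coefficients on the flat strip $\Omega_m$, and the associated mixed Dirichlet--Neumann BVP is an isomorphism by classical Schauder theory for linear elliptic problems (uniqueness is secured by even reflection across $\G_{-1}$, reducing to a Dirichlet problem on $\s\times(-2,0)$, and the strong maximum principle; note that $L_0$ has no zeroth-order term, since $\A_m$ depends on $v$ only through its derivatives). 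The implicit function theorem thus yields a smooth local selection $(f,p)\mapsto v(f,p)$, and by the global uniqueness from Step~1 this selection coincides with $\cR$, giving $\cR\in C^\infty(\V\times h^{2+\alpha}(\s), h^{2+\alpha}(\Omega_m))$.

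The main obstacle I expect is verifying that $\p_v\cF(f_0,p_0,v_0)$ is truly an isomorphism in the little-H\"older scale: once the uniform positivity of $(b_{ij}(y,f_0,\nabla_{f_0}v_0))$ is extracted from the eigenvalue bounds on $(a_{ij})$ and the lower-order perturbations are controlled via the smoothness of $\mu_m$ and \eqref{viscosity'}, the argument parallels the proof of Lemma~\ref{LS1} almost verbatim, with the Neumann condition on $\G_{-1}$ handled by reflection.
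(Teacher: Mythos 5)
Your Step 2 is essentially the paper's argument: the authors also apply the implicit function theorem to $(f,p,v_m)\mapsto(\A_m(f,v_m),\tr v_m-p,v_{m,2})$ and conclude that the linearization $(T,\tr,\p_2)$ is an isomorphism precisely because $T$ is uniformly elliptic and has no zeroth-order term. Your Step 1, however, takes a genuinely different route. The paper does not minimize a functional: it reflects the domain across $\G_{-1}$, solves the pure Dirichlet problem $\Q u_m=0$ in $\0_{-2-f,f}$ with data $p$ on the whole boundary by quoting Theorem 8.3 of \cite{La} (uniqueness from the weak maximum principle, Theorem 10.1 of \cite{GT}), and then obtains the Neumann condition $u_{m,2}=0$ on $\G_{-1}$ for free from the $y\mapsto-2-y$ symmetry of the domain, the data and the operator. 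Your variational argument (strictly convex integrand with Hessian $(a_{ij})$, coercivity from \eqref{viscosity'}, Neumann condition as natural boundary condition) is a legitimate alternative; what it buys is independence from the reflection trick and from the quoted solvability theorem for quasilinear Dirichlet problems, at the price of having to supply boundary regularity (De Giorgi--Nash--Moser plus Schauder) up to both the Dirichlet portion $\G(f)$ and the Neumann portion $\G_{-1}$ yourself -- and near $\G_{-1}$ the cleanest way to do that is again the even reflection the paper uses.

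There is one genuine gap: the passage from $C^{2+\alpha}$ to the \emph{little} H\"older space $h^{2+\alpha}$. Schauder bootstrapping only gives $\cR(f,p)\in C^{2+\alpha}(\ov{\0_m(f)})$; membership in the closure of the smooth functions does not follow from the estimates alone, so the sentence ``lifts it to $h^{2+\alpha}(\0_m(f))$'' is unproved. This matters for your Step 2 as written, because you set up the implicit function theorem in the little-H\"older scale, and both the claim that the frozen coefficients $b_{ij}(y,f_0,\nabla_{f_0}v_0)$ lie in $h^{\alpha}$ and the requirement that the base point $v_0=\cR(f_0,p_0)$ belong to $h^{2+\alpha}(\0_m)$ presuppose exactly the little-H\"older regularity you are trying to establish -- a circularity. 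The paper avoids this by running the implicit function theorem in the classical spaces $C^{2+\alpha}(\ov{\0_m})\to C^{\alpha}(\ov{\0_m})\times C^{2+\alpha}(\s)\times C^{1+\alpha}(\s)$, obtaining smoothness (in particular continuity) of $\cR$ there, and only then invoking a density argument: for smooth $(f,p)$ elliptic regularity gives $\cR(f,p)\in C^{\infty}(\ov{\0_m})\subset h^{2+\alpha}(\0_m)$, and since smooth data are dense in $\V\times h^{2+\alpha}(\s)$ and $h^{2+\alpha}(\0_m)$ is closed in $C^{2+\alpha}(\ov{\0_m})$, continuity of $\cR$ yields $\cR(\V\times h^{2+\alpha}(\s))\subset h^{2+\alpha}(\0_m)$, exactly as in the proof of Lemma \ref{LS1}. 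Your argument is repaired by the same reordering: apply your implicit function theorem in the $C^{2+\alpha}$ scale, then add this approximation step at the end.
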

\begin{proof}
We pick $f\in\V $ and  consider the quasilinear Dirichlet problem 
\begin{equation}\label{DP}
\left\{
 \begin{array}{rllllllllllll}
\Q u_m&=& 0 \qquad &\text{in} \quad  \Omega_{-2-f,f}, \\
u_m&=&p \qquad &\text{on} \quad  \p\Omega_{-2-f,f}.
  \end{array}
\right.
\end{equation}
Theorem 8.3 in \cite{La} together with condition \eqref{viscosity'} guarantees the solvability of \eqref{DP} in the Banach space  $C^{2+\alpha}(\ov{ \Omega_{-2-f,f}})$.
Furthermore, the uniqueness of the classical solution of \eqref{DP} is a consequence of the weak elliptic maximum principle for quasilinear elliptic equations, 
cf. Theorem 10.1 in \cite{GT}.  
Denoting  the unique solution of \eqref{DP} by  $u_m$, because of the symmetry of the domain $ \Omega_{-2-f,f}$
and of the boundary data, the  weak elliptic maximum principle ensures ensures also   that $u(x,y)=u(x,-2-y)$ for all $(x,y)\in  \Omega_{-2-f,f}$.
Particularly, $u_{m,2}=0$ on $\Gamma_{-1},$ meaning that $v_m:=\left(u_m\big|\0_m(f)\right)\circ\phi_{f}^{-1}\in C^{2+\alpha}(\ov{\Omega_m})  $ is a solution of \eqref{DP}, which is unique by the same result, Theorem 10.1 in \cite{GT}. . 
Whence, for all $f\in\V$ and $p\in  h^{2+\alpha}(\s),$ the problem \eqref{LS2} possesses a unique solution $\cR(f, p)\in C^{2+\alpha}(\ov{\Omega_m}).$
Since $\cR(f, p)\in C^{\infty}(\ov{\Omega_m})$ when both functions $f$ and $p$ are both smooth, if we show that $\cR$ is a smooth operator, then, by a density argument, 
we obtain that $\cR$ maps  into $ h^{2+\alpha}(\0_m).$

In order to prove this regularity property, we notice that $(\A_m(f,v_m), \tr v_m-p,\p_2v_m)=0$ if and only if $v_m:=\cR(f, p),$
where we consider
\[
 (f,p,v_m)\mapsto (\A_m(f,v_m), \tr v_m  -p,v_{m,2})\in C^{\alpha}(\ov{\Omega_m})\times C^{2+\alpha}(\s)\times C^{1+\alpha}(\s).
\]
as a map defined on $\V\times C^{2+\alpha}(\s)\times C^{2+\alpha}(\ov{\Omega_m}).$
Since $\A_m$ is  smooth, if we prove that $\p_{v_m}(\A_m(f,\cdot), \tr -p,\p_2)\big|_{v_m=\cR(f, p)}$ is an isomorphism, then by the implicit 
function theorem we deduce that $\cR$ is a smooth map and we are done.
From Lemma \ref{L:1} we obtain that $\p_{v_m}(\A_m(f,\cdot), \tr -p,\p_2)\big|_{v_m}=(T, \tr ,\p_2): C^{2+\alpha}(\ov{\Omega_m})\to C^{\alpha}(\ov{\Omega_m})\times C^{2+\alpha}(\s)\times C^{1+\alpha}(\s),$ 
where 
\begin{align*}
Tw_m:=&b_{ij}(y,f, \nabla_f v_m) w_{m,ij}+b(y,f, \nabla_f v_m) w_{m,2}+v_{m,ij}\p_{3} b_{ij}(y,f, \nabla_f v_m)\nabla_f w_m\\
&+v_{m,2}\p_3 b(y,f, \nabla_f v_m)\nabla_f w_m. 
\end{align*}
Hereby, $\p_3$ denotes   differentiation of the coefficient functions with respect to the third  variable $\nabla_f v_m$
Since $T$ is a linear elliptic operator and the coefficient of $w_m$ is zero, we conclude that  $(T,\tr,\p_2) $ is an isomorphism, and the proof is completed.
\end{proof}

With this notation,  we remark that $(f, v_w, v_m)$ is a solution to \eqref{TP} if and only if $f(0)=f_0,$
$v_w=\T(f)[-\p_tf,h],$  $v_m=\cR(f, \tr v_w-\gamma\kappa(f)+g(\h_m-\h_w)f),$ and 
\begin{align}\label{condsatis}
 \p_t f+ \B_m(f,\cR(f, \tr \T(f)[-\p_tf,h]-\gamma\kappa(f)+g(\h_m-\h_w)f))=0 
\end{align}
for all $t\in[0,T]$, for some $T>0$.
We shall now justify our choice of working with functions having integral mean zero.
\begin{lemma} [The volumes of both fluids are preserved] 
Given $(f,F)\in h^{2+\alpha}_0(\s)\times h^{1+\alpha}_0(\s), $ we have that
\begin{equation*}
F+ \B_m(f,\cR(f, \tr \T(f)[-F,h]-\gamma\kappa(f)+g(\h_m-\h_w)f))\in h^{1+\alpha}_0(\s).
\end{equation*}
\end{lemma}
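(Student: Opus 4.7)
The plan is to reduce the statement to a conservation-of-volume identity and then verify that identity by applying the divergence theorem to the quasilinear equation satisfied by $v_m$ after pulling it back to the physical domain. Since $F \in h^{1+\alpha}_0(\s)$ by hypothesis, the required regularity is inherited from the mapping properties of $\B_m$, $\cR$ and $\T$ established above, so only the integral-mean condition has to be checked. Concretely, writing
\[
p := \tr \T(f)[-F,h] - \gamma\kappa(f) + g(\rho_m - \rho_w)f \in h^{2+\alpha}(\s),
\qquad v_m := \cR(f,p) \in h^{2+\alpha}(\Omega_m),
\]
the task reduces to proving $\int_\s \B_m(f,v_m)\, dx = 0$.

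The first step is to transfer the problem back to the physical mud domain via the diffeomorphism $\phi_f$. Setting $u_m := v_m \circ \phi_f^{-1}$, Lemma~\ref{LS2} together with the definitions of $\A_m$ and $\B_m$ shows that $u_m \in C^{2+\alpha}(\overline{\Omega_m(f)})$ is a classical solution of
\[
\Q u_m = 0 \text{ in } \Omega_m(f), \qquad u_{m,2} = 0 \text{ on } \Gamma_{-1}, \qquad u_m = p \text{ on } \Gamma(f),
\]
and that the trace of $\langle \nabla u_m / \mu_m(|\nabla u_m|^2) \mid (-f',1)\rangle$ along $\Gamma(f)$ equals $\B_m(f,v_m)$ when read off on $\s$ via $\phi_f$.

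The second step is the divergence theorem. Since $\Q u_m = \dv\bigl(\nabla u_m / \mu_m(|\nabla u_m|^2)\bigr) = 0$ and $\Omega_m(f)$ has a $C^{2+\alpha}$ upper boundary, Gauss's formula yields
\[
0 = \int_{\p\Omega_m(f)} \frac{\nabla u_m \cdot \nu_{\mathrm{ext}}}{\mu_m(|\nabla u_m|^2)}\, dS.
\]
The two lateral contributions cancel by $2\pi$-periodicity in $x$; on $\Gamma_{-1}$ the outer normal is $(0,-1)$, so the integrand there vanishes by the Neumann condition $u_{m,2}=0$; and on $\Gamma(f)$ the outer normal is $\nu_{\mathrm{ext}} = (-f',1)/\sqrt{1+f'^2}$ with $dS = \sqrt{1+f'^2}\, dx$. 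Hence
\[
0 = \int_\s \frac{\nabla u_m(x,f(x)) \cdot (-f'(x),1)}{\mu_m(|\nabla u_m(x,f(x))|^2)}\, dx = \int_\s \B_m(f,v_m)(x)\, dx,
\]
using \eqref{Bm} in the last equality. Combined with $\int_\s F\, dx = 0$, this gives the claim.

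There is no genuine obstacle in this argument; the only subtle point is bookkeeping of the outer normal and the line element on $\Gamma(f)$ so that the boundary integral matches the definition of $\B_m$ exactly. Conceptually the lemma simply records that the quasilinear Darcy flux of $v_m$ through $\Gamma(f)$ has zero mean, which is precisely the statement that the mud volume is conserved along the evolution induced by \eqref{condsatis}.
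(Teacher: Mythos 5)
Your argument is correct and is essentially the paper's own proof: both reduce the claim to showing $\int_\s \B_m(f,v_m)\,dx=0$, pull back to the physical domain $\Omega_m(f)$, and apply the divergence theorem to $\dv\bigl(\nabla u_m/\mu_m(|\nabla u_m|^2)\bigr)=0$, using the impermeability condition on $\Gamma_{-1}$ (the paper is just terser, leaving the periodicity of the lateral contributions and the normal/line-element bookkeeping on $\Gamma(f)$ implicit). No further comment is needed.
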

\begin{proof} Let $v_w:=\tr \T(f)[-F,h]$, $v_m:=\cR(f, \tr v_w-\gamma\kappa(f)+g(\h_m-\h_w)f),$ $u_w:=v_w\circ\phi_f^{-1}$
and  $u_m:=v_m\circ\phi_f^{-1}.$
Having assumed \eqref{bot}, we find from  Stokes'  formula that
\begin{align*}
\int_\s \B_m(f,v_m)\, dx&=\int_\s \left\langle \frac {\nabla u_m   }{\mu_m (|\nabla u_m|^2)}  | (-f',1) \right\rangle(x,f(x))\, dx=\int_{\Gamma_f} \p_\nu\left(\frac {\nabla u_m   }{\mu_m (|\nabla u_m|^2)} \right)\, d\sigma\\
&=\int_{\0_m(f)} \nabla\left(\frac {\nabla u_m   }{\mu_m (|\nabla u_m|^2)} \right)\, dx=0,
\end{align*}
with $d\sigma$ denoting the curve integral.
This and our assumption $F\in  h^{1+\alpha}_0(\s) $ leads us to the desired conclusion.
\end{proof}

\section{Proof of the main result}\label{S4}
Resuming the last section, we have  reduced the original problem \eqref{newpb} to the following problem: given  $f_0\in\V$ 
find a positive time  $T=T(f_0)>0$ and solution curve $f$ with $f\in C([0,T),\V)\cap C^1([0,T), \Y)$ 
and 
\begin{equation}\label{eq:PB}
\cF(h(t),f(t),\p_tf(t))=0 \qquad\text{for all $t\in [0,T).$}
\end{equation}
Hereby, $\cF: h^{2+\alpha}(\s)\times \V\times \Y\to \Y$
is the nonlocal operator given by
\begin{equation}\label{eq:PBF}
\cF(h,f,F):=F+ \B_m(f,\cR(f, \tr \T(f)[-F,h]-\gamma\kappa(f)+g(\h_m-\h_w)f)).
\end{equation}
We note that the operator \eqref{eq:PBF} is fully nonlinear in each of its variables.

First, we observe that the tuple $(\ov h,0,0)$ is a solution of the equation $\cF(h,f,F)=0$ for any constant  $\ov h\in\R.$ 
Using the implicit function theorem we may express $F$ as a function of $(h,f)$ at least in a neighborhood of the point
$(\ov h  ,0,0),$ cf. Lemma \ref{Lema1}. 
Doing this, we  arrive at a fully nonlinear abstract evolution equation. 
Indeed, if $\ov h\in\R,$ and $(f,F)=(0,0)$ then problem  \eqref{T} has the constant solution  $\T(f)[-F,\ov h]=\ov h$, and 
therefore we also have $\cR(f, \tr \T(f)[-F,h]-\gamma\kappa(f)+g(\h_m-\h_w)f)=\ov h$.
Whence,  $(\ov h,0,0)$ is  indeed a   solution of   $\cF(h,f,F)=0$.
\begin{lemma}\label{Lema1} 
Given $\ov h\in\R$, the Fre\'chet derivative  $\p_F\cF(\ov h,0,0)$ is an isomorphism, that is
\[
 \p_F\cF(\ov h,0,0)\in{\rm Isom}(\Y).
 \]
 \end{lemma}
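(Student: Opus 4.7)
\medskip

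\textbf{Proof plan.} The plan is to compute $\p_F\cF(\ov h,0,0)$ explicitly by the chain rule, reduce it to a composition of classical Dirichlet-to-Neumann and Neumann-to-Dirichlet operators for the Laplacian (plus the identity), identify it as a Fourier multiplier on $\s$ whose symbol is bounded between positive constants, and then conclude the isomorphism property by a Fredholm argument.

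First I would verify that $(\ov h,0,0)$ is indeed a zero of $\cF$: at this point $\T(0)[0,\ov h]=\ov h$ (the constant function solves~\eqref{T}), $\kappa(0)=0$, and by Lemma~\ref{LS2} we get $\cR(0,\ov h)=\ov h$, while $\B_m(0,\ov h)=0$ is immediate from~\eqref{Bm}. Applying the chain rule to~\eqref{eq:PBF} and exploiting the linearity of $\T(f)[\cdot,\cdot]$ in its second argument, one obtains for $G\in\Y$
\begin{equation*}
\p_F\cF(\ov h,0,0)[G]=G+\p_{v_m}\B_m(0,\ov h)\bigl[\p_p\cR(0,\ov h)\bigl[\tr\T(0)[-G,0]\bigr]\bigr].
\end{equation*}

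Next I would identify the three component operators at the reference configuration. Since $\nabla\ov h=0$, the coefficients from Lemma~\ref{L:1} simplify to $b_{ij}(y,0,0)=\delta_{ij}/\mu_m(0)$ and $b(y,0,0)=0$, and on linearization all $\mu_m'$-terms drop out because each of them carries a factor $\nabla_f v_m$ that vanishes at $(0,\ov h)$. Consequently $\p_p\cR(0,\ov h)$ is the harmonic extension $q\mapsto w_m$ defined by $\Delta w_m=0$ in $\Omega_m$, $w_m|_{\Gamma_0}=q$, $w_{m,2}|_{\Gamma_{-1}}=0$. Similarly, from~\eqref{Bm} one reads off $\p_{v_m}\B_m(0,\ov h)[w_m]=\mu_m(0)^{-1}w_{m,2}|_{\Gamma_0}$. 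Writing $N_w:G\mapsto\tr\T(0)[-G,0]$ for the Neumann-to-Dirichlet map of the Laplacian in $\Omega_w$ (with $v_w|_{\Gamma_1}=0$, $v_{w,2}|_{\Gamma_0}=-\mu_w G$) and $D_m:q\mapsto w_{m,2}|_{\Gamma_0}$ for the Dirichlet-to-Neumann map just described, we arrive at
\begin{equation*}
\p_F\cF(\ov h,0,0)=I+\mu_m(0)^{-1}D_m N_w \qquad\text{on }\Y.
\end{equation*}

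Then I would compute the Fourier symbols by separation of variables. Both $D_m$ and $N_w$ commute with $x$-translations, and a short calculation gives $N_w(e^{ikx})=\mu_w|k|^{-1}\tanh|k|\,e^{ikx}$ and $D_m(e^{ikx})=|k|\tanh|k|\,e^{ikx}$ for $k\in\Z\setminus\{0\}$. Hence $\p_F\cF(\ov h,0,0)$ is the Fourier multiplier with symbol
\begin{equation*}
\sigma(k)=1+\frac{\mu_w}{\mu_m(0)}\tanh^2|k|\in\Bigl[1,\,1+\tfrac{\mu_w}{\mu_m(0)}\Bigr].
\end{equation*}
To pass from this symbol to an isomorphism on $\Y=h^{1+\alpha}_0(\s)$, I would write
\begin{equation*}
\p_F\cF(\ov h,0,0)=\Bigl(1+\tfrac{\mu_w}{\mu_m(0)}\Bigr)I-K,
\end{equation*}
where $K$ is the Fourier multiplier with symbol $\mu_w\mu_m(0)^{-1}\,\mathrm{sech}^2|k|$. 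Because this symbol decays exponentially, $K$ is smoothing (in particular, it maps $\Y$ continuously into $h^{2+\alpha}_0(\s)$) and hence compact on $\Y$ by the Arzel\`a--Ascoli embedding recalled in Section~\ref{S2}. Therefore $\p_F\cF(\ov h,0,0)$ is Fredholm of index zero, and injectivity is immediate from $\sigma(k)\geq 1$ (any kernel element has all Fourier coefficients equal to zero), which gives the claimed isomorphism.

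The main obstacle is the second step: checking carefully that the linearizations of the quasilinear operators $\A_m$ and $\B_m$ at the constant configuration $(0,\ov h)$ collapse cleanly to the Laplacian and its standard normal derivative. Once this is in hand, the rest is a routine symbol computation together with a Fredholm deduction, and the explicit form of $\sigma$ also makes the extraction of quantitative invertibility bounds transparent if they are needed later when applying the implicit function theorem.
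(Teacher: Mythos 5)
Your computation coincides with the paper's up to and including the symbol: the same chain-rule formula for $\p_F\cF(\ov h,0,0)$, the same observation that at $(f,v_m)=(0,\ov h)$ the quasilinear terms of $\A_m$ and $\B_m$ collapse (because $\nabla_f v_m=0$, so $\p_p\cR(0,\ov h)$ is the harmonic extension with Neumann bottom condition and $\p_{v_m}\B_m(0,\ov h)[w_m]=\tr w_{m,2}/\mu_m(0)$), and the same separation-of-variables calculation producing the multiplier symbol $1+\frac{\mu_w}{\mu_m(0)}\tanh^2(k)$, which agrees with \eqref{pF}. Where you genuinely diverge is the final invertibility step: the paper notes $\sup_k|1/m(k)|<\infty$ and invokes the Fourier multiplier theorem of Schmeisser--Triebel \cite[Theorem 3.6.3]{ST} to conclude that the multiplier with symbol $1/m$ is bounded on $C^{p+\alpha}(\s)$, hence on $\Y$, giving the inverse directly; you instead split off the constant $1+\mu_w/\mu_m(0)$ and treat the remainder $K$ (symbol $\frac{\mu_w}{\mu_m(0)}\,\mathrm{sech}^2 k$) as a smoothing, hence compact, operator on $\Y$, so that $\p_F\cF(\ov h,0,0)$ is Fredholm of index zero and its injectivity (from $m(k)\geq 1$) yields bijectivity, with the bounded inverse then coming from the open mapping theorem. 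Your route is more self-contained -- it needs only the exponential decay of $\mathrm{sech}^2$, the compact embedding of small H\"older spaces already recalled in Section \ref{S2}, and standard Riesz--Fredholm theory -- whereas the paper's citation of the multiplier theorem gives slightly more for free (boundedness of the inverse multiplier on every $C^{p+\alpha}(\s)$, which is the same tool reused later for the generator property in the proof of Theorem \ref{thmain}). Both arguments are correct; in yours, just make sure to record the small standard point that the multiplier identity, verified on trigonometric polynomials, extends to all of $\Y$ by density and continuity before reading off injectivity from the Fourier coefficients.
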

\begin{proof} Using the chain rule and the linearity of the solution operator $\T$ with respect to $(F,h)$,
we find that
\begin{align}\label{1}
\p_F\cF(\ov h,0,0)&={\rm id}_{\Y}-\p_{v_m}\B_m(0,\ov h)[\p_p\cR(0, \ov h)[\tr \T(0)[\cdot,0]]].
\end{align} 
It readily follows from \eqref{Bm} that  we have
\begin{align}\label{2}
 \p_{v_m}\B_m(0,\ov h)[w_m]= \frac{\tr w_{m,2}}{\mu_m(0)}\qquad\text{for all $w_m\in h^{2+\alpha}(\0_m).$}
\end{align}
Further on, using the definition of the operator $\cR$ and   Lemma \ref{L:1} we get that $w_m:= \p_p\cR(0, {\ov h})[q]$ is the solution of the problem
\begin{equation}\label{s1}
\left\{ 
\begin{array}{rlllll}
  \Delta w_m&=&0 \qquad &\text{in} \quad \Omega_m\\
  w_m&=&q \qquad &\text{on} \quad \Gamma_0\\
  w_{m,2} &=& 0 \qquad &\text{on} \quad \Gamma_{-1}
 \end{array}
\right.
\end{equation}
for all $q\in h^{2+\alpha}(\s).$
Pick now $q\in h^{2+\alpha}(\s) $ and consider its Fourier expansion $q=\sum_{k\in \Z} b_ke^{ikx}.$ 
Expanding also $w_m$ as  $w_m=\p_p\cR(0, {\ov h})[q]= \sum_{k\in \Z} B_k(y)e^{ikx}$   we find by inserting both expressions into \eqref{s1}
 and comparing the coefficients of $e^{ikx}$ for every $k,$ that $B_k$ is the solution of
\begin{equation*}
\left\{ 
\begin{array}{rlllll}
  B_k(y)''-k^2B_k(y)&=&0  \qquad &-1<y<0\\
  B_k(0)&=&b_k \\
  B_k'(-1)&=&0
 \end{array}
\right.
\end{equation*}
and therefore
\begin{align}\label{3}
 B_k(y)=\left(\frac{ e^k}{e^k+e^{-k}}e^{ky}+\frac{e^{-k}}{e^k+e^{-k}}e^{-ky}\right) b_k, \qquad k\in\Z.
\end{align}
Next, we consider the Fourier expansion of $F= \sum_{k\in \Z\setminus\{0\}} a_k e^{ikx}\in\Y.$ 
Similarly as above, we obtain from the definition of the operator $\T$ that 
\begin{align}\label{4}
 \tr \T(0)[F,0]=\sum_{k \in \Z\setminus\{0\}} -\mu_w\frac{ \tanh(k)}{k} a_ke^{ikx}.
\end{align}
 Gathering the relations \eqref{3} and \eqref{4}, we obtain from \eqref{1} and \eqref{2} that
\begin{align}\label{pF} 
 \p_F\cF(\ov h,0,0)[F]=\sum_{m\in \Z\setminus\{0\}} \left( 1+\frac{\mu_w}{\mu_m(0)}\tanh^2(k) \right)a_k e^{ikx} \quad \text{for}\quad F= \sum_{k\in \Z} a_k e^{ikx}.
\end{align}
Whence, we have shown that $\p_F\cF(\ov h,0,0)$ is a Fourier multiplier with symbol $m(k):=1+(\mu_w/\mu_m(0))\tanh^2(k), k\in\Z\setminus\{0\}$. 
Since, $\sup_{k\in\Z}|1/m(k)|<\infty,$ we infer from \cite[Theorem 3.6.3]{ST} that the Fourier multiplier with symbol $1/m$  belongs to  $\kL(C^{p+\alpha}(\s))$ for all $p\in\N.$  
From this, it follows at once that  
it  belongs also to $\kL(\Y)$ and that it is the inverse of  $\p_F\cF(\ov h,0,0)$.
\end{proof}

Using Lemma \ref{Lema1}, we obtain the following reformulation of the   equation  $\cF(h,f,F)=0$.

\begin{prop}\label{P:4}
Given $\ov h\in\R,$ there exist an open neighborhood $\ww\times\oo\times \zz\subset h^{2+\alpha}(\s)\times \V\times \Y$   of $(\ov h,0,0)$  and a smooth operator 
$\Phi:\ww\times\oo\subset h^{2+\alpha}(\s)\times \X \to \Y$ with the property that 
if $(h,f,F)\in \ww\times\oo\times \zz$ is a solution of the equation $\cF(h,f,F)=0$  then
\begin{equation}\label{CP}
 F=\Phi (h, f).
\end{equation}
Moreover, if $\mu_m'\geq0,$ then we can choose $\zz=\Y.$
\end{prop}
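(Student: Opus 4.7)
The plan is to apply the implicit function theorem to $\cF: h^{2+\alpha}(\s)\times\V\times\Y\to\Y$ at the point $(\ov h,0,0)$. Three ingredients must be verified: (i) $\cF(\ov h,0,0)=0$, which is the observation just preceding Lemma~\ref{Lema1}, namely that when $(f,F)=(0,0)$ and $h=\ov h$ the function $\T(0)[0,\ov h]$ is the constant $\ov h$, hence $p$ and $\cR(0,\ov h)$ are also the constant $\ov h$, so that $\B_m(0,\ov h)=0$; (ii) $\cF$ is smooth as a composition of the smooth operators $\T$ (Lemma~\ref{LS1}), $\cR$ (Lemma~\ref{LS2}), $\B_m$, $\kappa$ and the trace; (iii) $\p_F\cF(\ov h,0,0)\in{\rm Isom}(\Y)$, which is precisely Lemma~\ref{Lema1}. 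The IFT then produces open neighborhoods $\ww\subset h^{2+\alpha}(\s)$ of $\ov h$ and $\oo\subset\X$ of $0$ with $\oo\subset\V$, an open neighborhood $\zz\subset\Y$ of $0$, together with a smooth map $\Phi:\ww\times\oo\to\zz$ such that on $\ww\times\oo\times\zz$ the equation $\cF(h,f,F)=0$ is equivalent to $F=\Phi(h,f)$.

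The \emph{moreover} assertion requires upgrading the local uniqueness of $F$ in $\zz$ to global uniqueness in $\Y$ whenever $\mu_m'\geq 0$. The plan is to show that $\p_F\cF(h,f,F)\in{\rm Isom}(\Y)$ holds at every $(h,f,F)\in\ww\times\oo\times\Y$ (possibly after shrinking $\ww$ and $\oo$) and then to invoke a continuation / global-inverse-function argument anchored at the base point $(\ov h,0,0)$. Extending the computation of Lemma~\ref{Lema1} to a general point yields
\[
\p_F\cF(h,f,F)=\mathrm{id}_\Y-\p_{v_m}\B_m(f,v_m)\bigl[\p_p\cR(f,p)\bigl[\tr\T(f)[\,\cdot\,,0]\bigr]\bigr],
\]
with $v_w=\T(f)[-F,h]$, $p=\tr v_w-\gamma\kappa(f)+g(\rho_m-\rho_w)f$ and $v_m=\cR(f,p)$. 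The operator on the right is the identity minus the composition of the water Neumann-to-Dirichlet map (a positive linear operator, as already visible in the Fourier symbol from the proof of Lemma~\ref{Lema1}) with the linearised mud Dirichlet-to-Neumann map; the latter is monotone in the natural pairing by the strict convexity of the energy $u\mapsto\int_{\Omega_m(f)}W(|\nabla u|^2)\,dV$ with $W'=1/\mu_m$, whose convexity is ensured by~\eqref{viscosity'}. The extra hypothesis $\mu_m'\geq 0$ fixes the sign of the correction coming from differentiating the denominator $\mu_m(\tr|\nabla_f v_m|^2)$ in $\B_m$, and this is what preserves the invertibility of $\mathrm{id}_\Y-(\cdot)$ at every $F\in\Y$ rather than only near $F=0$.

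The main obstacle is this last step. The local IFT argument is routine once Lemma~\ref{Lema1} is in hand, but the global extension rests on a delicate interplay between the water and mud variational structures and the sign condition $\mu_m'\geq 0$; without that assumption, the convexity of the mud energy alone furnishes two inequalities of the same sign that are consistent with, rather than contradictory to, the existence of a large-norm spurious root $F\in\Y\setminus\zz$.
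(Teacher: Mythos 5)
Your first paragraph is exactly the paper's argument for the main assertion: $\cF(\ov h,0,0)=0$, smoothness of $\cF$ from Lemmas \ref{LS1}, \ref{LS2} and the smoothness of $\B_m$ and $\kappa$, and Lemma \ref{Lema1} feed into the implicit function theorem, which yields $\ww\times\oo\times\zz$ and $\Phi$. No objection there.

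The \emph{moreover} part, however, has a genuine gap. Your plan is to show $\p_F\cF(h,f,F)\in{\rm Isom}(\Y)$ for \emph{every} $F\in\Y$ and then conclude by a ``continuation / global-inverse-function argument''. Even granting the invertibility claim, this does not give the statement: everywhere-invertibility of the partial Fr\'echet derivative only makes the solution set $\{F\in\Y:\,\cF(h,f,F)=0\}$ discrete, and it does not exclude a second, large-norm root; a Hadamard-type global inverse or continuation argument would additionally require properness of $F\mapsto\cF(h,f,F)$ or uniform bounds on $\|[\p_F\cF(h,f,F)]^{-1}\|$ along a connecting path, none of which you establish (and compactness for a degree-theoretic substitute is not available here, since the composition $\T$, $\cR$, $\B_m$ gains no regularity overall). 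Moreover, the asserted monotonicity of the linearized mud Dirichlet-to-Neumann map at a general $(f,F)$ is only sketched; away from the flat configuration the operator is no longer a Fourier multiplier, so the positivity visible in \eqref{pF} does not transfer for free, and the role you attribute to $\mu_m'\geq 0$ (``fixing the sign of the correction from the denominator of $\B_m$'') is not proved. The paper takes a different, and in fact more elementary, route: fix $(h,f)$ and two roots $F_1,F_2\in\Y$, pass to the corresponding potentials $(u_w^i,u_m^i)$ on $\0_w(f)$ and $\0_m(f)$ solving \eqref{newpbi}, subtract to obtain the linear system \eqref{UU} for $U_w=u_w^1-u_w^2$ and $U_m=u_m^1-u_m^2$, and apply the strong maximum principle together with Hopf's boundary point lemma at a point of $\G(f)$ where the common trace attains a positive maximum; comparing the two flux conditions there contradicts the strict monotonicity of $x\mapsto x/\mu_m(x^2+r^2)$, which is precisely where $\mu_m'\geq 0$ enters. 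This yields $F_1=F_2$ in all of $\Y$ directly, with no global invertibility of $\p_F\cF$ needed. To complete your proof you would have to either supply the missing properness/degree input for your continuation step, or replace it by a pointwise maximum-principle uniqueness argument of this kind.
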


\begin{proof}
Since
$\cF(\ov h, 0,0)=0$, 
$\cF \in C^{\infty}( h^{2+\alpha}(\s)\times \V\times \Y, \Y ),$ and 
$\p_{F}\cF (\ov h, 0, 0)\in {\rm Isom}(\Y)$, cf. Lemma \ref{Lema1},
 we obtain from the implicit function theorem that there exist an open neighborhood $\ww\times \oo    \times \zz \subset h^{2+\alpha}(\s)\times \V \times \Y$ 
 of $(\ov h,0,0)$  and $\Phi \in C^\infty(\oo \times \ww, \zz)$ such that 
\[
\cF(f,h, F)=0 \quad \text{and} \quad (f,h,F)\in \oo \times \ww \times \zz \quad \text{implies} \quad  F=\Phi(f,h).
\]
In the rest of the proof, we show that for shear thickening fluids, we have $\zz=\Y.$ 
To this end, let  $f\in \V,$ $h\in h^{2+\alpha}(\s),$ and   $F_1, F_2 \in \Y$ be given such that 
\[
 \cF(h,f,F_1)=\cF(f,h,F_2)=0.
\]
If
\begin{align*}
 &u_m^i=\T(f)[-F_i,h)]\circ \phi_f^{-1}, \qquad i=1,2 \\
 &u_w^i=\cR(f, \tr \T(f)[-F_i,h]-\gamma \kappa(f)+g(\rho_m-\rho_w)f)\circ \phi_f^{-1}, \qquad i=1,2 
\end{align*} 
then, by the definition of $\T$,$\cR$, $\B_w$, and $\B_m,$ we find that $(u_m^i, u_w^i)$ solve the following system  of equations
\begin{equation}\label{newpbi}
\left\{
 \begin{array}{rllllllllllll}
\Delta u_w^i&=& 0 \qquad &\text{in} \quad   \Omega_w(f), \\
\nabla \left(\displaystyle\frac{\nabla u_m^i}{\mu_m(|\nabla u_m^i|^2)}\right)&=& 0 \qquad &\text{in} \quad  \Omega_m(f), \\
u_w^i &=& h \qquad &\text{on} \quad  \Gamma_{1}, \\
 u_{m,2}^i &=& 0  \qquad &\text{on} \quad  \Gamma_{-1}, \\
u_w^i-u_m^i&=& \gamma \kappa_{\Gamma(f)}+g(\rho_w-\rho_m)f \qquad &\text{on} \quad  \Gamma(f),\\
F_i= -\displaystyle\frac{1}{\mu_w}\sqrt{1+f'^2}\p_\nu u_w^i&=& -\displaystyle\frac{\sqrt{1+f'^2}}{\mu_m(|\nabla u_m^i|^2)}\p_\nu u_m^i
\qquad &\text{on} \quad  \Gamma(f).
  \end{array}
\right.
\end{equation}
Defining $U_m:=u_m^1-u_m^2$ and $U_w:=u_w^1-u_w^2$ we obtain by subtracting the corresponding equations of \eqref{newpbi}  for $i=1$ and $i=2$ that 
\begin{equation}\label{UU}
\left\{
 \begin{array}{rllllllllllll}
\Delta U_w&=& 0 \qquad &\text{in} \quad   \Omega_w(f), \\
\kL U_m&=& 0 \qquad &\text{in} \quad  \Omega_m(f), \\
U_w &=& 0 \qquad &\text{on} \quad  \Gamma_{1}, \\
 U_{m,2}&=& 0  \qquad &\text{on} \quad  \Gamma_{-1}, \\
U_w-U_m&=& 0  \qquad &\text{on} \quad  \Gamma(f),
  \end{array}
\right.
\end{equation}
where 
\begin{align*}
\kL U_m:= \Q u_m^1-\Q u_m^2&=a_{ij}(\nabla u_m^1)u_{m,ij}^1-a_{ij}(\nabla u_m^2)u_{m,ij}^2\\
&= a_{ij}(\nabla u_m^1)U_{m,ij}+u_{m,ij}^2 \langle c_{ij} | \nabla U_m\rangle
\end{align*}
and $c_{ij}:=\int_0^1 \p a_{ij}(t \nabla u_m^1+(1-t)\nabla u_m^2)\, dt, 1\leq i, j\leq 2.$
Let us assume  that $U_w$ is not vanishing and its maximum is positive (the case when $U_w$ possesses a 
negative minimum can be treated similarly).
Then, we  obtain from the strong maximum principle that the positive   maximum  of $U_w$ is attained on the boundary $\Gamma(f),$ say at $z_0=(x_0,f(x_0)).$
Because $U_m=U_w$ on  $\Gamma(f)$ the maximum of $U_m$ is also attained at $z_0.$ 
Hopf's maximum principle ensures then that 
\begin{align}\label{Ho}
 \p_\nu U_w(z_0)<0 \quad \text{and} \quad  \p_\nu U_m(z_0)>0.
\end{align}
Moreover, from the last equations in \eqref{newpbi} we see  that
\begin{align}\label{relmu}
 \frac{\p_\nu u_m^1}{\mu(|\nabla u_m^1|^2)}=\frac{1}{\mu_w}\p_\nu u_w^1 \quad \text{and} \quad \frac{\p_\nu u_m^2}{\mu(|\nabla u_m^2|^2)}=\frac{1}{\mu_w}\p_\nu u_w^2.
\end{align}
Letting $x:=\p_\nu u_m^1(z_0)$ and $y:=\p_\nu u_m^2(z_0)$, we get from the second inequality in  \eqref{Ho} that $x>y.$
Furthermore, since  $U_m$ attains its maximum at $z_0$ its tangential derivative $\p_\tau U_m$  vanishes at $z_0$, meaning that $\p_\tau u_m^1(z_0)=\p_\tau u_m^2(z_0)=:r\in\R.$
Whence,  $|\nabla u_m^1|^2(z_0)=x^2+r^2$, $|\nabla u_m^2|^2(z_0)=y^2+r^2$ and relations \eqref{relmu} and \eqref{Ho} yield
\begin{align}\label{xmu}
\frac{x}{\mu_m(x^2+r^2)} < \frac{y}{\mu_m(y^2+r^2)}.
\end{align}
But, this is contradicting the fact that the function $\R\ni x\mapsto x/\mu(x^2+r^2)$ is strictly increasing, as we have 
\begin{align*}
 \frac{d}{dx}\left(\frac{x}{\mu_m(x^2+r^2)}\right)&= \frac{\mu_m(x^2+r^2)-2x^2\mu_m'(x^2+r^ 2)}{\mu_m^2(x^2+r^2)}\\
&\geq \frac{\mu_m(x^2+r^2)-2(x^2+r^2)\mu_m'(x^2+r^2)}{\mu_m^2(x^2+r^2)}\overset {\eqref{viscosity'}}{\geq} 0,
\end{align*}
where the first inequality holds true only if $\mu_m'\geq 0.$ 
Thus, $U_w\equiv 0$ and the last equation of \eqref{newpbi} leads us to the desired conclusion $F_1  = F_2$.
\end{proof}

We finish this section with the proof of our main result, Theorem \ref{thmain}.
The proof follows  by making use of  the abstract theory for fully nonlinear parabolic as presented in Chapter 8 of \cite{L}.
\begin{proof}[Proof of Theorem \ref{thmain}] Let $\alpha\in(0,1)$, $\ov h\in\R,$ and  $h$ be given such that \eqref{bcthm} is satisfied.
Then, we infer from Proposition \ref{P:4} that solving the original problem \eqref{newpb} is equivalent, for small initial and boundary data, to solving the fully nonlinear problem
\begin{equation}
 \p_tf=\Phi(t,f),\qquad f(0)=f_0,
\end{equation}
  whereby $\Phi(t,f):=\Phi(h(t),f)$ for all $(t,f)\in[0,T_0]\times \oo$, with $T_0>0$  chosen such that $h(t)\in \ww$ for all $t\in[0,T_0].$ 
Since $h$ is continuous, we obtain from Proposition \ref{P:4} that $\Phi$ and $\Phi_f$ are continuous mappings, that is $\Phi, \Phi_f\in C([0,T_0]\times \oo,\Y).$
Taking into account that the above considerations are true for any $\beta\in(0,\alpha)$ and that the small H\"older spaces satisfy the following property
\[
 (h^{\sigma_0}(\s), h^{\sigma_1}(\s))_\theta=h^{(1-\theta)\sigma_0+\theta\sigma_1}(\s)
\]
if $\theta \in (0,1)$ and $(1-\theta)\sigma_0+\theta \sigma_1\in \R^+ \setminus \N,$ where $(\cdot,\cdot)_\theta$  
denotes the continuous interpolation method of DaPrato and Grisvard \cite{DaP} (see also \cite{L}),
the proof of the theorem reduces to showing that $\p_f\Phi(\ov h,0)$ is the  generator of a strongly continuous analytic semigroup in $\kL(\Y).$ 

In order to prove this property, we infer from Proposition \ref{P:4} that 
\begin{align}\label{formdtp}
 \p_f \Phi(\ov h,0)=-[\p_F \cF(\ov h, 0,0)]^{-1}\circ \p_f\cF(\ov h,0, 0),
\end{align}
where $\p_F \cF(\ov h, 0,0)$ is the isomorphism given by \eqref{pF}.
We determine now a representation for $\p_f\cF(\ov h,0, 0)$.
Differentiating \eqref{eq:PBF} with respect to $f,$ yields 
\begin{equation}\label{frefP}
\p_f \cF(\ov h, 0,0)[f]= \p_{v_m}\B_m(0, \ov h)[\p_p \cR(0, \ov h)[-\gamma \p \kappa(0)[f]+g(\rho_m-\rho_w)f]]\qquad \text{for all $f\in\X.$}
\end{equation}
Indeed, this simplified expression is a consequence of the fact that $ \T(f)[0, \ov h]=\ov h$, $\cR(f,\ov h)=\ov h$,  and $\B_m(f,\ov h)=0$ for all $f\in  \V.$ 
Taking into account that $\p \kappa(0)[f]=f''$ for all $f\in\X$ and expending $f=\sum_{k \in \Z\setminus\{0\}}  a_k e^{ikx}$,
the relations \eqref{3} and \eqref{pF} lead us to the following representation
  \begin{align}\label{dfPhi}
 \p_f \Phi(\ov h, 0)f= \sum_{k \in \Z\setminus\{0\}} \lambda_k a_k e^{ikx}\qquad\text{for $f=\sum_{k \in \Z\setminus\{0\}}  a_k e^{ikx}$}
\end{align}
where the symbol $(\lambda_k)$ is given by
\begin{align}\label{lk}
 \lambda_k= \frac{k \tanh(k)}{\mu_m(0)+\mu_m \tanh^2(k)}[g(\rho_w-\rho_m)-k^2 \gamma], \qquad k\in \Z\setminus\{0\}.
\end{align}
The desired generator property may be obtained now from Theorem 3.6.3 in \cite{ST} (or Theorem 3.4 in \cite{EM1}).
We have verified therewith all the assumptions of Theorem 8.4.1 in \cite{L} and the desired statement Theorem \ref{thmain} follows from this abstract result.
\end{proof}

\vspace{0.5cm}
\hspace{-0.5cm}{\large \bf Acknowledgement}\\[2ex]
This research has been supported by the German Research Foundation (DFG) under the
grant ES 195/5-1.

\end{document}